\begin{document}

\markboth{A.B. Malinowska and N. Martins}{A.B. Malinowska and N. Martins}

\articletype{}

\title{Generalized transversality conditions\\
 for the Hahn quantum variational calculus}

\author{Agnieszka B. Malinowska$^{\rm a}$$^{\ast}$\thanks{$^\ast$Corresponding author. Email: abmalinowska@ua.pt
\vspace{6pt}} and Nat\'{a}lia Martins$^{\rm b}$\\\vspace{6pt}  $^{\rm a}${\em{Faculty of Computer Science, Bia{\l}ystok University of Technology, 15-351 Bia\l ystok, Poland}}; $^{\rm b}${\em{Department of Mathematics, University of Aveiro, 3810-193 Aveiro, Portugal } }}

\date{}

\maketitle

\begin{abstract}
We prove optimality conditions for generalized quantum variational
problems with a Lagrangian depending on the free end-points. Problems
of calculus of variations of this type cannot be solved using the
classical theory.\bigskip

\begin{keywords} Hahn's difference operator;
Jackson--Norl$\ddot{u}$nd's integral; quantum calculus; calculus of
variations; Euler--Lagrange equation; generalized natural boundary
conditions.
\end{keywords}
\begin{classcode}39A13, 39A70, 49J05, 49K05, 49K15.\end{classcode}\bigskip

\end{abstract}


\section{Introduction}

The (classical) calculus of variations is an old branch of mathematics that has many applications
in physics, geometry, engineering, dynamics, control theory, and economics.
The basic problem of calculus of variations can be formulated as follows: among all
differentiable functions $y:[a,b]\rightarrow\mathbb{R}$  such that $y(a) = \alpha$ and $y(b) = \beta$,
where $\alpha, \beta$ are fixed real numbers,
find the ones that minimize (or maximize) the functional
$$\mathcal{L}[y]=\int_a^b L(t,y(t),y^\prime(t)) dt.$$
It can be proved that the candidates to be minimizers or maximizers
to this basic problem must satisfy the differential equation
$$\frac{d}{dt}\partial_3 L(t,y(t),y^\prime(t))= \partial_2 L(t,y(t),y^\prime(t))$$
called the Euler--Lagrange equation
(where $\partial_i L$ denotes the partial derivative of $L$ with respect to its $i$th argument).
If the boundary condition $y(a) = \alpha$ is not present in the problem, then to find the
candidates for extremizers we have to
add another necessary condition:
$\partial_3 L(a,y(a),y^\prime(a))=0$; if $y(b) = \beta$ is not present, then $\partial_3 L(b,y(b),y^\prime(b))=0$.
These two conditions are usually called natural boundary conditions.

However, many important physical phenomena are described by nondifferentiable functions.
Several different approaches to deal with nondifferentiable functions
are proposed in the literature of variational calculus. In this paper we follow
the new \emph{Hahn quantum variational approach} \cite{MalinowskaTorres,BritoMT}.

The Hahn difference operator, $D_{q,\omega}$, was introduced in 1949 by Hahn \cite{Hahn}
and is defined by
\[
D_{q,\omega}\left[  f\right]  \left(  t\right)  :=\frac{f\left(  qt+\omega
\right)  -f\left(  t\right)  }{\left(  q-1\right)  t+\omega},  \quad t\neq\omega_0
\]
where $q\in ]0,1[$ and $\omega>0$ are real fixed numbers, $\omega_{0}:=\displaystyle\frac{\omega}{1-q}$,  and
$f$ is a real function defined on an interval $I$ containing $\omega_0$.

The Hahn difference operator has been applied successfully in the construction
of families of ortogonal polynomials as well as in approximation problems
\cite{alvares,odzi,Petronilho}. However, during 60 years, the construction of the proper inverse
of Hahn's difference operator remained as an open question.
The problem was solved in 2009 by Aldwoah \cite{1}
(see also \cite{Aldwoah:IJMS}).

The Hahn quantum variational calculus was started in 2010 with the
work \cite{MalinowskaTorres}. In that paper, among other results,
the authors formulated the basic and isoperimetric problems of the calculus of
variations with the Hahn derivative and obtained the respective Euler--Lagrange equations.
The Euler--Lagrange equation for quantum
variational problems involving Hahn's derivatives of higher-order
was obtained in \cite{BritoMT}. The purpose of this paper is to
present optimality conditions for generalized quantum variational problems. The work is
motivated by an economic problem which is explained in
\cite{Zinober}. Briefly the economic nature of the problem lies in
the effect of permitting the royalty in the profit maximizing firm
problem. This more general form leads naturally to new kind of problems in
calculus of variations and can be formulated in the
following way: what are the necessary optimality conditions for the
problem of the calculus of variations with a free end-point $y(b)$
but whose Lagrangian depends explicitly on $y(b)$? Terminal
conditions, which are also known as the transversality conditions
are important in economic policy models (for a deeper discussion we
refer the reader to \cite{Sen}): the optimal control or decision
rules are not unique without these boundary conditions. Our object
here is to state the natural boundary conditions for a dynamic
adjustment model. Assuming that due to some constraints of
economical nature the dynamic does not depend on the usual
derivative or the forward difference operator, but on the Hahn
quantum difference operator $D_{q,\omega}$, we present the
Euler--Lagrange equation and the natural boundary conditions for this
model. Our assumption is connected with a moot question: what kind
of ``time'' (continuous or discrete) should be used in the
construction of dynamic models in economics? Although individual
economic decisions are generally made at discrete time intervals, it
is difficult to believe that they are perfectly synchronized as
postulated by discrete models. The usual assumption that the
economic activity takes place continuously, is a convenient
abstraction in many applications. In others, such as the ones
studied in financial market equilibrium, the assumption of
continuous trading corresponds closely to reality. \\
One of the approaches proposed in the literature to deal with the question of time mentioned above, is
the time scale approach, which typically deals with delta-differentiable (or nabla-differentiable)
functions \cite{A:U:08,Atici,Z:N:D,Rui,R:A:D,M:T,infHorizon,nat:del,N:T,N:T:2}.
The origins of this idea dates back to the late 1980's when
S.~Hilger introduced this notion in his Ph.D. thesis (directed by B.~Aulbach)
and showed how to unify continuous time and discrete time dynamical
systems \cite{Hilger}. However, the Hahn quantum calculus is not covered
by the Hilger time scale theory. This is well explained
in the 2009 Ph.D. thesis of Aldwoah \cite{1} (see also \cite{Aldwoah:IJMS}).
Here we just note the following: the main advantage of the Hahn quantum variational
calculus is that we are able
to deal with nondifferentiable functions,
even discontinuous functions. Variational problems in the time scale setting
are formulated for functions that are delta-differentiable (or nabla-differentiable).
It is well known that delta-differentiable functions are necessarily continuous.
This is not the case in the Hahn quantum calculus: see Example~\ref{non:dif} (also Subsection~3.3 in \cite{BritoMT}),
where a discontinuous function is $q,\omega$-differentiable
in all the real interval $[-1,1]$.

The paper is organized as follows. In Section~\ref{sec:prelim} we
summarize all the necessary definitions and properties of the Hahn
difference operator and the associated $q,\omega$-integral. In
Section~\ref{sec:mr} we formulate the more general problem of the
calculus of variations with a Lagrangian that may also depend on the
unspecified end-points $y(a)$ and  $y(b)$. Then, we  prove our main
results: the Euler--Lagrange equation (Theorem~\ref{E-L}),
natural boundary conditions (Theorem~\ref{Theorem natural
boundary conditions}), necessary optimality conditions
for isoperimetric problems (Theorem~\ref{normalcase} and Theorem~\ref{thm:abn}),
and a sufficient optimality condition for variational problems (Theorem~\ref{suff}). Section~\ref{sec:Ex}
provides concrete examples of application of our results. We end with Section~\ref{sec:con} of conclusions and future perspectives.


\section{Preliminaries}
\label{sec:prelim}

Let $q\in]0,1[$ and $\omega\geq0 \footnote{Although Hahn and Aldwoah considered only $\omega >0$, the theory works well if we consider also $\omega=0.$}$. Define
\[
\omega_{0}:=\frac{\omega}{1-q}
\]
and let  $I$ be a real interval containing $\omega_{0}$.
For a function $f$ defined on $I$,
the \emph{Hahn difference operator} of $f$ is given by
\begin{equation*}
\displaystyle D_{q,\omega} [f](t):=
\begin{cases} \displaystyle
\frac{f(qt+\omega)-f(t)}{(q-1)t+\omega} & \text{ if } t\neq \omega_0\\
&\\
\displaystyle f'( \omega_0) & \text{ if } t = \omega_0
\end{cases}
\end{equation*}
provided that $f$ is differentiable at $\omega_0$
(where  $f'$  denotes the Fr\'{e}chet derivative of $f$).
 $D_{q,\omega}\left[f\right]$ is called
the $q,\omega$\emph{-derivative of} $f$, and $f$ is said to be $q,\omega$\emph{-differentiable on} $I$
if $D_{q,\omega}\left[f\right]\left(\omega_{0}\right)$ exists.

\begin{remark}
Note that
when $q\rightarrow 1$ we obtain the forward $h$-difference operator
\[
\Delta_{h}\left[f\right]\left(t\right)
:=\frac{f\left(t+h\right) - f\left(t\right)}{h},
\]
and when $\omega= 0$ we obtain the Jackson $q$-difference operator
\begin{equation*}
\displaystyle D_{q,0} [f](t):=
\begin{cases} \displaystyle
\frac{f(qt)-f(t)}{(q-1)t} & \text{ if } t\neq 0\\
&\\
\displaystyle f'(0) & \text{ if } t = 0
\end{cases}
\end{equation*}
provided
$f^{\prime}\left(0\right)$ exists.
Hence, we can state that the $D_{q,\omega}$ operator generalizes  the forward
$h$-difference and the Jackson $q$-difference operators \cite{Ernst,Kac}.

Notice also that,
under appropriate conditions,
\[
\lim_{q\rightarrow1}D_{q,0}\left[f\right]\left(t\right)
=f^{\prime}\left(t\right).
\]
\end{remark}

\begin{example} (\cite{MalinowskaTorres,BritoMT}) Let $q=\omega=1/2$. In this case $\omega_0=1$.
It is easy to see that $f : [-1,1] \rightarrow \mathbb{R}$ given by
\begin{equation*}
f(t) =
\begin{cases}
-t & \text{ if } t \in ]-1,0[\cup ]0,1]\\
0 & \text{ if } t=-1\\
1 & \text{ if } t=0
\end{cases}
\end{equation*}
is not a continuous function but
is $q,\omega$-differentiable in $[-1,1]$ with
\begin{equation*}
D_{q,\omega} [f](t) =
\begin{cases}
-1 & \text{ if } t \in ]-1,0[\cup ]0,1]\\
1 & \text{ if } t=-1\\
-3 & \text{ if }  t=0.
\end{cases}
\end{equation*}
\end{example}

\begin{example} (\cite{MalinowskaTorres})
\label{non:dif}
Let $q\in]0,1[$, $\omega=0$, and
\begin{equation*}
f(t)=
\begin{cases}
t^2 & \text{ if } t\in\mathbb{Q}\\
-t^2 & \text{ if } t\in\mathbb{R}\setminus\mathbb{Q}.
\end{cases}
\end{equation*}
Note that $f$ is only Fr\'{e}chet differentiable in zero, but since
 $\omega_0=0$, $f$ is $q,\omega$-differentiable on the entire real line.
\end{example}

The Hahn difference operator has the following properties:

\begin{theorem}(\cite{1,Aldwoah:IJMS})
If $f,g:I\rightarrow\mathbb{R}$ are $q,\omega$-differentiable  and $t\in I$, then:
\begin{enumerate}
\item  $D_{q,\omega}[f](t) \equiv 0$ on $I$ if and only if $f$ is constant;

\item $D_{q,\omega}\left[  f+g\right]  \left(  t\right)  =D_{q,\omega}\left[
f\right]  \left(  t\right)  +D_{q,\omega}\left[  g\right]  \left(  t\right)$;

\item $D_{q,\omega}\left[  fg\right]  \left(  t\right)  =D_{q,\omega}\left[
f\right]  \left(  t\right)  g\left(  t\right)  +f\left(  qt+\omega\right)
D_{q,\omega}\left[  g\right]  \left(  t\right)$;

\item $\displaystyle D_{q,\omega}\left[  \frac{f}{g}\right]  \left(  t\right)
=\frac{D_{q,\omega}\left[  f\right]  \left(  t\right)  g\left(  t\right)
-f\left(  t\right)  D_{q,\omega}\left[  g\right]  \left(  t\right)  }{g\left(
t\right)  g\left(  qt+\omega\right)  }$ if $g\left(
t\right)  g\left(  qt+\omega\right)  \neq0$;

\item $f\left(  qt+\omega\right)  =f\left(  t\right)  +\left(  t\left(
q-1\right)  +\omega\right)  D_{q,\omega}\left[  f\right]  \left(  t\right)  $.
\end{enumerate}
\end{theorem}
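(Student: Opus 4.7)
The plan is to verify each of the five properties in turn, splitting the argument in each case according to whether $t \neq \omega_0$, where the explicit difference-quotient formula is available, or $t = \omega_0$, where $D_{q,\omega}$ reduces by definition to the Fréchet derivative. The unifying observation for the second regime is that $\omega_0$ is the unique fixed point of $\sigma(t) := qt + \omega$, since $q\omega_0 + \omega = q\tfrac{\omega}{1-q} + \omega = \omega_0$, and consequently $(q-1)\omega_0 + \omega = 0$.

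For $t \neq \omega_0$ the algebraic properties (2)--(5) are routine. Property (2) is immediate from the linearity of the difference quotient. For (3) I would add and subtract $f(qt+\omega)g(t)$ in the numerator of $(fg)(qt+\omega) - (fg)(t)$ and split, which produces $D_{q,\omega}[f](t)g(t) + f(qt+\omega)D_{q,\omega}[g](t)$. Property (4) follows from (3) by writing $f = (f/g)\cdot g$, solving for $D_{q,\omega}[f/g]$, and clearing denominators with $g(t)g(qt+\omega)$; alternatively one adds and subtracts $f(t)g(t)$ in the numerator of the $f/g$ difference quotient. Property (5) is a direct rearrangement of the definition. At $t = \omega_0$ these same identities reduce to the classical Fréchet-derivative rules, because $f(q\omega_0+\omega) = f(\omega_0)$ makes the two right-hand summands in (3) collapse to $f'(\omega_0)g(\omega_0) + f(\omega_0)g'(\omega_0)$, and in (5) both sides become $f(\omega_0)$ since $(q-1)\omega_0+\omega=0$.

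The main obstacle is the forward direction of (1). The converse is trivial: a constant function has zero difference quotient for $t \neq \omega_0$ and zero Fréchet derivative at $\omega_0$. For the nontrivial direction, assume $D_{q,\omega}[f] \equiv 0$ on $I$. At each $t \neq \omega_0$ the definition forces $f(\sigma(t)) = f(t)$. Because $q \in \,]0,1[$, the map $\sigma$ is a contraction of $\mathbb{R}$ with unique fixed point $\omega_0$, and since $I$ is an interval containing $\omega_0$ one has $\sigma(I) \subseteq I$ and $\sigma^n(t) \to \omega_0$ for every $t \in I$. Iterating the fixed-point identity gives $f(t) = f(\sigma^n(t))$ for all $n$. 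Differentiability of $f$ at $\omega_0$ forces continuity there, so passing to the limit yields $f(t) = f(\omega_0)$ and $f$ is constant. This contraction-plus-continuity step is the only non-algebraic ingredient in the whole argument; everything else reduces to straightforward manipulations of the Hahn difference quotient.
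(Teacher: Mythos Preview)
Your proof proposal is correct, but there is nothing to compare it against: the paper does not supply its own proof of this theorem. The result is quoted from Aldwoah's thesis and the accompanying article (references \cite{1,Aldwoah:IJMS} in the paper), and the authors simply cite it as background in the preliminaries section.

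That said, your argument is sound. The algebraic verifications of (2)--(5) for $t \neq \omega_0$ are exactly the standard ones, and your handling of the point $t = \omega_0$ via the fixed-point identity $\sigma(\omega_0) = \omega_0$ and the reduction to the ordinary Fr\'echet derivative is the right way to close those cases. For (1), the iteration $f(t) = f(\sigma^n(t))$ together with $\sigma^n(t) \to \omega_0$ and continuity of $f$ at $\omega_0$ (implied by Fr\'echet differentiability there) is precisely the argument one finds in Aldwoah's work, so your reconstruction matches the intended proof in the cited source.
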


\begin{proposition}(\cite{1})
Let $a,b\in\mathbb{R}$. We have
$$\label{tpower}
D_{q,\omega}(at+b)^n=a\sum_{k=0}^{n-1}(a(qt+\omega)+b)^k(at+b)^{n-k-1},
$$
for $n\in\mathbb{N}$ and $t \neq \omega_0$.
\end{proposition}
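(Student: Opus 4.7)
My plan is to prove the formula by a direct calculation from the definition of the Hahn difference operator, using the standard algebraic factorization of a difference of $n$th powers. Since $t \neq \omega_0$, we have $(q-1)t + \omega \neq 0$, so the defining quotient
\[
D_{q,\omega}[(at+b)^n](t) = \frac{(a(qt+\omega)+b)^n - (at+b)^n}{(q-1)t + \omega}
\]
is well defined. Setting $A := a(qt+\omega) + b$ and $B := at + b$, the numerator is $A^n - B^n$.

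Next, I invoke the identity
\[
A^n - B^n = (A - B)\sum_{k=0}^{n-1} A^k B^{n-1-k},
\]
valid in any commutative ring. The key observation is that
\[
A - B = a(qt+\omega) + b - (at+b) = a\bigl((q-1)t + \omega\bigr),
\]
so the factor $(q-1)t + \omega$ appearing in $A-B$ exactly cancels the denominator. This yields
\[
D_{q,\omega}[(at+b)^n](t) = a\sum_{k=0}^{n-1}(a(qt+\omega)+b)^k (at+b)^{n-1-k},
\]
which is the stated formula.

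There is essentially no obstacle: the identity $A^n - B^n = (A-B)\sum_{k=0}^{n-1} A^k B^{n-1-k}$ does all the work, and the structure of the Hahn operator's denominator is tailor-made for the cancellation. An alternative proof by induction on $n$, using the product rule (item~3 of the preceding theorem), would also succeed, but it requires more bookkeeping to re-index the sum in the inductive step, so the direct approach above is preferable.
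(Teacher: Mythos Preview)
Your proof is correct. The paper itself does not supply a proof of this proposition; it is quoted from \cite{1} (Aldwoah's thesis) and stated without argument, so there is no in-paper proof to compare against. Your direct computation via the factorization $A^n - B^n = (A-B)\sum_{k=0}^{n-1} A^k B^{n-1-k}$ together with the cancellation $A - B = a\bigl((q-1)t+\omega\bigr)$ is the natural and standard way to establish the formula.
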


Let $\sigma\left(t\right)  =qt+\omega$, for all $t\in I$.
Note that $\sigma$ is a contraction, $\sigma(I)\subseteq I$,
$\sigma\left(t\right)<t$ for $t>\omega_{0}$,
$\sigma\left(  t\right)>t$ for $t<\omega_{0}$,
and $\sigma\left(  \omega_{0}\right)  =\omega_{0}$.

We use the following standard notation of $q$-calculus:
for $k\in\mathbb{N}_{0}:=\mathbb{N}\cup\left\{  0\right\}$,
$\displaystyle\left[  k\right]  _{q}:=\frac{1-q^{k}}{1-q}$.

\begin{lemma}(\cite{1})
Let $k\in\mathbb{N}$ and $t\in I$. Then,
\begin{enumerate}
\item
$\sigma^{k}\left(  t\right)  =\underset{k\text{-times}}{\underbrace{\sigma
\circ\sigma\circ \cdots \circ\sigma}}\left(  t\right)  =q^{k}t+\omega\left[
k\right]_{q}$;

\item
$\displaystyle \left(  \sigma^{k}\left(  t\right)  \right)  ^{-1}=\sigma^{-k}\left(
t\right)  =\frac{t-\omega\left[  k\right]  _{q}}{q^{k}}.
$
\end{enumerate}
\end{lemma}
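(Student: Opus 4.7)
The plan is to prove both parts by induction on $k$, using the explicit formula $[k]_q = (1-q^k)/(1-q)$ to simplify.

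For part 1, I would proceed by induction on $k$. The base case $k=1$ is just $\sigma(t) = qt + \omega$, and $\omega[1]_q = \omega \cdot (1-q)/(1-q) = \omega$, so the formula holds. For the inductive step, assume $\sigma^k(t) = q^k t + \omega[k]_q$. Then
\[
\sigma^{k+1}(t) = \sigma(\sigma^k(t)) = q\bigl(q^k t + \omega[k]_q\bigr) + \omega = q^{k+1} t + \omega\bigl(q[k]_q + 1\bigr).
\]
The remaining task is the algebraic identity $q[k]_q + 1 = [k+1]_q$, which follows from
\[
q \cdot \frac{1-q^k}{1-q} + 1 = \frac{q - q^{k+1} + 1 - q}{1-q} = \frac{1 - q^{k+1}}{1-q} = [k+1]_q,
\]
completing the induction.

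For part 2, having established the explicit formula for $\sigma^k$, I would simply solve the equation $\sigma^k(s) = t$ for $s$. From $q^k s + \omega[k]_q = t$ one obtains $s = (t - \omega[k]_q)/q^k$, and a direct substitution confirms that this is a two-sided inverse of $\sigma^k$ on $I$. (The preamble of the statement already notes $\sigma(I) \subseteq I$, and since $q \neq 0$ the map $\sigma^k$ is injective, so the inverse is well-defined on $\sigma^k(I)$.)

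There is no real obstacle here; the proof is essentially mechanical. The only point worth being careful about is the base case for part 1, where one must recognize that $\omega[1]_q = \omega$ is exactly the constant appearing in the definition of $\sigma$, and the telescoping identity $q[k]_q + 1 = [k+1]_q$ in the inductive step. Part 2 requires no new work beyond inverting the linear map provided by part 1.
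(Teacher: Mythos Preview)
Your proof is correct. The paper does not actually prove this lemma; it is quoted from \cite{1} (Aldwoah's thesis) as a preliminary fact, so there is no in-paper argument to compare against. Your induction for part~1 using the identity $q[k]_q+1=[k+1]_q$, followed by inverting the linear map for part~2, is the standard and expected argument.
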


Following \cite{1,Aldwoah:IJMS} we define the notion of
$q,\omega$-\emph{integral} (also known as the \emph{Jackson--N\"{o}rlund integral}) as follows:

\begin{definition}
Let  $a,b\in I$ and $a<b$. For $f:I\rightarrow\mathbb{R}$
the $q,\omega$-\emph{integral of} $f$ from $a$ to $b$ is given by
\[
\int_{a}^{b}f\left(  t\right)  d_{q,\omega}t:=\int_{\omega_{0}}^{b}f\left(
t\right)  d_{q,\omega}t-\int_{\omega_{0}}^{a}f\left(  t\right)  d_{q,\omega
}t\text{,}
\]
where
\[
\int_{\omega_{0}}^{x}f\left(  t\right)  d_{q,\omega}t:=\left(  x\left(
1-q\right)  -\omega\right)  \sum_{k=0}^{+\infty}q^{k}f\left(  xq^{k}
+\omega\left[  k\right]  _{q}\right)  \text{, }x\in I\, ,
\]
provided that the series converges at $x=a$ and $x=b$. In that case, $f$ is
called $q,\omega$-\emph{integrable on} $\left[a,b\right]$. We say that
$f$ is $q,\omega$-\emph{integrable over} $I$ if it is $q,\omega$-\emph{integrable}
over $[a,b]$ for all $a,b\in I$.
\end{definition}

\begin{remark}
The $q,\omega$-\emph{integral} generalizes the Jackson
$q$-integral and the N\"{o}rlund sum \cite{Kac}.
When $\omega=0$, we obtain
the Jackson $q$-integral
\[
\int_{a}^{b}f\left(  t\right)  d_{q}t:=\int_{0}^{b}f\left(  t\right)
d_{q}t-\int_{0}^{a}f\left(  t\right)  d_{q}t\text{,}
\]
where
\[
\int_{0}^{x}f\left(  t\right)  d_{q}t:=x\left(  1-q\right)  \sum_{k=0}
^{+\infty}q^{k}f\left(  xq^{k}\right)  \text{.}
\]
When $q\rightarrow1$, we obtain the N\"{o}rlund sum
\[
\int_{a}^{b}f\left(  t\right)  \Delta_{\omega}t :=\int_{+\infty}^{b}f\left(
t\right)  \Delta_{\omega}t-\int_{+\infty}^{a}f\left(  t\right)  \Delta
_{\omega}t,
\]
where
\[
\int_{+\infty}^{x}f\left(  t\right)  \Delta_{\omega}t :=-\omega\sum
_{k=0}^{+\infty}f\left(  x+k\omega\right)  \text{.}
\]
\end{remark}

It can be shown  that if $f:I\rightarrow\mathbb{R}$
is continuous at $\omega_{0}$,
then $f$ is $q,\omega$-\emph{integrable over} $I$ (see \cite{1,Aldwoah:IJMS} for the proof).

\begin{theorem}(\cite{1} Fundamental Theorem of Hahn's Calculus)
Assume that $f:I\rightarrow \mathbb{R}$ is continuous at $\omega_{0}$ and,
for each $x\in I$, define
\[
F\left(  x\right)  :=\int_{\omega_{0}}^{x}f\left(  t\right)  d_{q,\omega
}t\text{.}
\]
Then $F$ is continuous at $\omega_{0}$. Furthermore, $D_{q,\omega}\left[
F\right]  \left(  x\right)$ exists for every $x\in I$ and
$D_{q,\omega}\left[  F\right]  \left(  x\right)  =f\left(  x\right)$.
Conversely,
$\int_{a}^{b}D_{q,\omega}\left[  f\right]  \left(  t\right)  d_{q,\omega
}t=f\left(  b\right)  -f\left(  a\right)$  for all $a,b\in I$.
\end{theorem}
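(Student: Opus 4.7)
The plan is to prove the three assertions (continuity of $F$ at $\omega_0$, the derivative formula $D_{q,\omega}[F](x)=f(x)$ on all of $I$, and the converse) by working directly with the series definition of the Jackson--N\"orlund integral. The manipulations rest on two algebraic identities that I will use repeatedly: first, since $\omega = (1-q)\omega_0$ one has $x(1-q)-\omega = (1-q)(x-\omega_0)$, so the outer prefactor vanishes precisely at $\omega_0$; second, $[k+1]_q = [k]_q + q^k$, which yields the shift identity $(qx+\omega)q^k+\omega[k]_q = xq^{k+1}+\omega[k+1]_q$.

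For continuity at $\omega_0$, I write
\[
F(x) = (1-q)(x-\omega_0)\sum_{k=0}^{\infty} q^k f\!\left(xq^k+\omega[k]_q\right).
\]
The arguments $xq^k+\omega[k]_q$ lie inside any fixed neighbourhood of $\omega_0$ once $x$ is close enough to $\omega_0$ (uniformly in $k$), so by continuity of $f$ at $\omega_0$ there is a uniform bound $M$ and the series is majorized by $M\sum q^k = M/(1-q)$. The factor $(x-\omega_0)$ then forces $F(x)\to 0 = F(\omega_0)$.

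For $D_{q,\omega}[F](x)=f(x)$ at $x\neq\omega_0$, I compute $F(qx+\omega)$ from the series. The prefactor becomes $(qx+\omega)(1-q)-\omega = q(x(1-q)-\omega)$, and the shift identity above lets me reindex the sum by $j=k+1$, giving
\[
F(qx+\omega) = (x(1-q)-\omega)\sum_{j=1}^{\infty} q^j f\!\left(xq^j+\omega[j]_q\right).
\]
Subtracting from $F(x)$ leaves only the $k=0$ term, so $F(qx+\omega)-F(x) = -(x(1-q)-\omega)f(x) = ((q-1)x+\omega)f(x)$, and dividing by $(q-1)x+\omega$ yields $f(x)$. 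At $x=\omega_0$, one uses $F(\omega_0)=0$ and the same rewriting to get
\[
\frac{F(x)-F(\omega_0)}{x-\omega_0} = (1-q)\sum_{k=0}^{\infty} q^k f\!\left(xq^k+\omega[k]_q\right);
\]
dominated convergence against the summable majorant $Mq^k$ lets me pass the limit inside, and $(1-q)\sum q^k f(\omega_0) = f(\omega_0)$, so $F'(\omega_0)$ exists and equals $f(\omega_0)$, i.e.\ $D_{q,\omega}[F](\omega_0)=f(\omega_0)$.

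For the converse, I compute $D_{q,\omega}[f](xq^k+\omega[k]_q)$ directly: its denominator is $(q-1)(xq^k+\omega[k]_q)+\omega = q^k((q-1)x+\omega)$ (using $(q-1)\omega[k]_q+\omega = \omega q^k$), so the factor $q^k$ cancels against the one weighting the series. What remains is a telescoping sum
\[
\sum_{k=0}^{N}\bigl[f(xq^{k+1}+\omega[k+1]_q)-f(xq^k+\omega[k]_q)\bigr] = f(xq^{N+1}+\omega[N+1]_q)-f(x),
\]
which tends to $f(\omega_0)-f(x)$ as $N\to\infty$ by continuity of $f$ at $\omega_0$. Multiplying by the prefactor $x(1-q)-\omega = -((q-1)x+\omega)$ gives $\int_{\omega_0}^{x}D_{q,\omega}[f]\,d_{q,\omega}t = f(x)-f(\omega_0)$, and subtraction at the endpoints $a,b$ produces $f(b)-f(a)$. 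The main technical points will be justifying the interchange of limit and infinite sum at $x=\omega_0$ and ensuring convergence of the telescoping tail; both reduce to the boundedness of $f$ near $\omega_0$ supplied by continuity there.
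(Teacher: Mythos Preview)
The paper does not supply its own proof of this statement; it appears in the preliminaries with the citation ``\cite{1}'' to Aldwoah's thesis and no argument is given here, so there is nothing in the present paper to compare your proof against.

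That said, your argument is correct and is the standard route: the shift identity $(qx+\omega)q^{k}+\omega[k]_{q}=xq^{k+1}+\omega[k+1]_{q}$ gives the telescoping $F(x)-F(qx+\omega)=(x(1-q)-\omega)f(x)$ for $x\neq\omega_{0}$; continuity of $f$ at $\omega_{0}$ provides the uniform bound needed both for $F(x)\to 0$ as $x\to\omega_{0}$ and for passing the limit through the series to obtain $F'(\omega_{0})=f(\omega_{0})$; and in the converse the cancellation of the $q^{k}$ weight against the denominator $q^{k}((q-1)x+\omega)$ reduces the integral to a telescoping sum whose tail converges precisely because $f$ is continuous at $\omega_{0}$. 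One small point worth making explicit in a final write-up is that the very convergence of the defining series for $\int_{\omega_{0}}^{x}D_{q,\omega}[f]\,d_{q,\omega}t$ is itself a consequence of that same continuity hypothesis, so the converse is well posed under the stated assumptions.
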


Aldwoah proved that the  $q,\omega$-\emph{integral} has the following properties:

\begin{theorem}(\cite{1,Aldwoah:IJMS})
\label{Propriedades do integral}
Let $f,g:I\rightarrow\mathbb{R}$ be $q,\omega$-\emph{integrable on} $I$,
$a,b,c\in I$ and $k\in\mathbb{R}$. Then,
\begin{enumerate}
\item $\displaystyle \int_{a}^{a}f\left(  t\right)  d_{q,\omega}t=0$;

\item $\displaystyle \int_{a}^{b}kf\left(  t\right)  d_{q,\omega}t=k\int_{a}^{b}f\left(
t\right)  d_{q,\omega}t$;

\item \label{eq:item3} $\displaystyle \int_{a}^{b}f\left(  t\right)  d_{q,\omega}t=-\int_{b}^{a}f\left(
t\right)  d_{q,\omega}t$;

\item $\displaystyle \int_{a}^{b}f\left(  t\right)  d_{q,\omega}t=\int_{a}^{c}f\left(
t\right)  d_{q,\omega}t+\int_{c}^{b}f\left(  t\right)  d_{q,\omega}t$;

\item $\displaystyle \int_{a}^{b}\left(  f\left(  t\right)  +g\left(  t\right)  \right)
d_{q,\omega}t=\int_{a}^{b}f\left(  t\right)  d_{q,\omega}t+\int_{a}
^{b}g\left(  t\right)  d_{q,\omega}t$;

\item Every Riemann integrable function $f$ on $I$
is $q,\omega$-\emph{integrable on} $I$;

\item \label{itm:ip} If $f,g:I\rightarrow \mathbb{R}$
are $q,\omega$-differentiable and $a,b\in I$, then
\[
\displaystyle \int_{a}^{b}f\left(  t\right)  D_{q,\omega}\left[  g\right]  \left(  t\right)
d_{q,\omega}t= \Big[f\left(  t\right)  g\left(  t\right)  \Big]_{a}^{b}
-\int_{a}^{b}D_{q,\omega}\left[  f\right]  \left(  t\right)  g\left(
qt+\omega\right)  d_{q,\omega}t.
\]
\end{enumerate}
\end{theorem}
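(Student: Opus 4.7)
The plan is to verify items (1)--(5) directly from the series definition, then address (6) via boundedness, and finally derive the integration-by-parts formula (7) from the $q,\omega$-product rule together with the Fundamental Theorem of Hahn's Calculus.

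For items (1)--(5): each reduces to a manipulation of the defining series
\[
\int_{\omega_0}^{x} f(t)\,d_{q,\omega}t = (x(1-q)-\omega)\sum_{k=0}^{+\infty} q^k f\bigl(xq^k + \omega[k]_q\bigr).
\]
Item (1) is immediate by setting $x=a$ in both terms of the definition of $\int_a^a$, which cancel. Items (2) and (5) follow since scalar multiplication and termwise addition pass through the (assumed convergent) series. Item (3) is just the definitional antisymmetry $\int_a^b = \int_{\omega_0}^b - \int_{\omega_0}^a$, and (4) follows by adding and subtracting $\int_{\omega_0}^{c}$ inside the definition of $\int_a^b$.

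For item (6): if $f$ is Riemann integrable on an interval $J\subseteq I$ containing $\omega_0$, $a$, and $b$, then $f$ is bounded on $J$ with some bound $M$. Because $\sigma^k(x) = xq^k + \omega[k]_q$ converges to $\omega_0$ and the orbit remains inside the smallest closed subinterval of $J$ containing $x$ and $\omega_0$ (since $\sigma$ is a contraction), the general term of the defining series is dominated by $M q^k$, giving absolute convergence at both endpoints.

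For item (7), which is the main content: starting from the $q,\omega$-product rule and using the commutativity of the ordinary product $fg=gf$, one obtains
\[
D_{q,\omega}[fg](t) = f(t) D_{q,\omega}[g](t) + D_{q,\omega}[f](t)\, g(qt+\omega).
\]
Integrate both sides from $a$ to $b$, apply linearity (item (5)), and invoke the Fundamental Theorem of Hahn's Calculus to identify $\int_a^b D_{q,\omega}[fg](t)\,d_{q,\omega}t = [fg]_a^b$. Rearranging yields the stated formula. The main obstacle I anticipate is ensuring that, under mere $q,\omega$-differentiability, all integrands appearing in (7) are themselves $q,\omega$-integrable so that the manipulations are justified; the cleanest route is to assume implicitly (as Aldwoah does) enough regularity at $\omega_0$, e.g.\ continuity of $f$ and $g$ there, so that the relevant defining series converge, after which the derivation above is purely formal.
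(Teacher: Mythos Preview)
The paper does not prove this theorem at all: it is stated in the preliminaries with a citation to Aldwoah's thesis and the Aldwoah--Hamza paper, and no argument is given. Consequently there is no ``paper's own proof'' to compare your proposal against.

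That said, your proposal is essentially the standard derivation one finds in those references. Items (1)--(5) are indeed immediate from the series definition and the decomposition $\int_a^b = \int_{\omega_0}^b - \int_{\omega_0}^a$. Your argument for (6) is correct: Riemann integrability on a closed bounded interval forces boundedness, the orbit $\{\sigma^k(x)\}$ stays in the interval with endpoints $x$ and $\omega_0$, and geometric domination by $M q^k$ gives absolute convergence. For (7) you correctly apply the symmetric form of the product rule (obtained by swapping $f$ and $g$ in the stated rule, using $fg=gf$) and then the Fundamental Theorem to evaluate $\int_a^b D_{q,\omega}[fg]$; your caveat about needing enough regularity at $\omega_0$ (e.g.\ continuity of $f$, $g$ there so the Fundamental Theorem applies and all integrals exist) is exactly the implicit standing assumption in Aldwoah's treatment.
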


Property~\ref{itm:ip} of Theorem~\ref{Propriedades do integral} is known as
$q,\omega$-\emph{integration by parts formula}.

\begin{lemma}(\textrm{cf.} \cite{1})
\label{positividade}
Let $b \in I$ and $f$ be $q,\omega$-\emph{integrable} over $I$.
Suppose that
$$
f(t)\geq 0, \quad \forall
t\in\left\{  q^{n}b+\omega\left[  n\right]  _{q}:n\in \mathbb{N}_{0}\right\}.
$$
\begin{enumerate}

\item If $\omega_0 \leq b$, then
$$
\int_{\omega_0}^b f(t)d_{q,\omega}t\geq 0.
$$

\item If $\omega_0 > b$, then
$$
\int_b^{\omega_0} f(t)d_{q,\omega}t\geq 0.
$$
\end{enumerate}
\end{lemma}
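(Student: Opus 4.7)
The plan is to unwind the definition of the Jackson--N\"orlund integral and inspect the sign of its two factors.

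First I would write, using the very definition of the $q,\omega$-integral,
\[
\int_{\omega_0}^{b} f(t)\,d_{q,\omega}t \;=\; \bigl(b(1-q)-\omega\bigr)\sum_{k=0}^{+\infty} q^{k}\,f\!\bigl(bq^{k}+\omega[k]_q\bigr).
\]
The crucial observation is that the points appearing in the summand are precisely the elements of the set $\{q^{n}b+\omega[n]_q : n\in\mathbb{N}_0\}$ on which $f$ is assumed to be nonnegative by hypothesis, and each weight $q^{k}$ is strictly positive. Hence the series is a sum of nonnegative terms, so it is itself nonnegative (the series converges by assumption, since $f$ is $q,\omega$-integrable).

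Next I would analyse the prefactor $b(1-q)-\omega$. Rewriting it as $(1-q)(b-\omega_0)$ and recalling that $0<q<1$, its sign is exactly the sign of $b-\omega_0$. For part~(1), $\omega_0\le b$ gives $b(1-q)-\omega\ge 0$, and combining with the nonnegativity of the series yields $\int_{\omega_0}^{b} f(t)\,d_{q,\omega}t\ge 0$ immediately. For part~(2), $\omega_0>b$ gives $b(1-q)-\omega<0$, so the same identity yields $\int_{\omega_0}^{b} f(t)\,d_{q,\omega}t\le 0$; applying property~\ref{eq:item3} of Theorem~\ref{Propriedades do integral} (interchange of limits) then gives
\[
\int_{b}^{\omega_0} f(t)\,d_{q,\omega}t \;=\; -\int_{\omega_0}^{b} f(t)\,d_{q,\omega}t \;\ge\; 0,
\]
as required.

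There is no real obstacle here beyond carefully tracking the sign of $b(1-q)-\omega$ relative to the position of $b$ with respect to $\omega_0$; the rest is a direct reading of the series definition, and the integrability hypothesis ensures that the manipulations above are legitimate.
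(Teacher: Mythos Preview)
Your argument is correct: unwinding the series definition, observing that every summand $q^{k}f(bq^{k}+\omega[k]_q)$ is nonnegative by hypothesis, and factoring the prefactor as $b(1-q)-\omega=(1-q)(b-\omega_0)$ to read off its sign is exactly the right route, and the flip of limits via property~\ref{eq:item3} of Theorem~\ref{Propriedades do integral} handles part~(2) cleanly.

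As for comparison with the paper: the paper does not give its own proof of this lemma at all --- it is stated with a ``cf.~\cite{1}'' and left unproved, with only a remark afterwards warning that the version in \cite{1} (Lemma~6.2.7 there) contains an inconsistency regarding the relative positions of $a$, $b$, and $\omega_0$. Your direct computation is the natural way to establish it and makes transparent exactly why the two cases split on the sign of $b-\omega_0$, which is precisely the point the paper's remark is flagging.
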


\begin{remark} As noted in \cite{BritoMT}
there is an inconsistency in \cite{1}.
Indeed, Lemma~6.2.7 of \cite{1} is only valid
if $b \ge \omega_0$ and $a \le b$.
\end{remark}

\begin{remark}
\label{rem:diff:int} In general, the Jackson--N\"{o}rlund integral does not satisfies the
following inequality (for a counterexample see \cite{1}):
$$
\left\vert \int_{a}^{b}f\left(  t\right)  d_{q,\omega}t\right\vert
\leq \int_{a}^{b}| f\left(  t\right)|  d_{q,\omega}t , \ \ \ a,b \in I.
$$
\end{remark}

For $s\in I$ we define
\[
\left[  s\right]  _{q,\omega}:=\left\{  q^{n}s+\omega\left[  n\right]_q  :n\in
\mathbb{N}
_{0}\right\}  \cup\left\{  \omega_{0}\right\}  \text{.}
\]

The following definition and lemma are important
for our purposes.

\begin{definition}
Let $s \in I$, $s\neq \omega_{0}$ and  $g:I\times]-\bar{\theta},\bar{\theta}[  \rightarrow \mathbb{R}$.
We say that $g\left(  t,\cdot\right)$ is differentiable at $\theta_{0}$ uniformly
in $\left[s\right]_{q,\omega}$ if for every $\varepsilon>0$ there exists $\delta>0$ such that
\[
0<\left\vert \theta-\theta_{0}\right\vert <\delta
\Rightarrow
\left\vert \frac{g\left(  t,\theta\right)  -g\left(  t,\theta_{0}\right)
}{\theta-\theta_{0}}-\partial_{2} g\left(  t,\theta_{0}\right)  \right\vert
<\varepsilon
\]
for all $t\in\left[  s\right]_{q,\omega}$,
where $\displaystyle\partial_{2}g=\frac{\partial g}{\partial\theta}$.
\end{definition}

\begin{lemma}(\cite{MalinowskaTorres})
\label{derivada do integral} Let $s \in I$, $s\neq \omega_{0}$, and
assume that $g:I\times]-\bar{\theta},\bar{\theta}[  \rightarrow
\mathbb{R}$ is differentiable at
$\theta_{0}$ uniformly in $\left[s\right]_{q,\omega}$,
$\displaystyle G\left(  \theta\right):=\int_{\omega_{0}}^{s}g\left(
t,\theta\right)  d_{q,\omega}t$ for $\theta$ near $\theta_{0}$, and
$\displaystyle\int_{\omega_{0}}^{s}\partial_{2}g\left(  t,\theta_{0}\right)
d_{q,\omega}t$ exist. Then, $G\left(  \theta\right)$
is differentiable at $\theta_{0}$ with
$G^{\prime}\left(  \theta_{0}\right)
=\displaystyle\int_{\omega_{0}}^{s}\partial_{2}g\left(  t,\theta_{0}\right)  d_{q,\omega}t$.
\end{lemma}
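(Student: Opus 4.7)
The plan is to work directly from the series representation of the Jackson--N\"orlund integral. By definition,
\[
G(\theta) = \int_{\omega_{0}}^{s} g(t,\theta)\, d_{q,\omega}t
 = \bigl(s(1-q)-\omega\bigr)\sum_{k=0}^{+\infty} q^{k}\, g\bigl(sq^{k}+\omega[k]_{q},\,\theta\bigr),
\]
and every point $t_{k}:=sq^{k}+\omega[k]_{q}$ lies in $[s]_{q,\omega}$. Since $s\neq\omega_{0}$, the prefactor $s(1-q)-\omega$ is nonzero, and the hypothesis that $\int_{\omega_{0}}^{s}\partial_{2}g(t,\theta_{0})\,d_{q,\omega}t$ exists gives a second convergent series
$\bigl(s(1-q)-\omega\bigr)\sum_{k}q^{k}\partial_{2}g(t_{k},\theta_{0})$ with the same prefactor.

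First, for $\theta\neq\theta_{0}$, I would form the difference quotient and subtract the candidate derivative. Using linearity of the $q,\omega$-integral (items~2 and~5 of Theorem~\ref{Propriedades do integral}) and the series formulas above, this collapses into the single series
\[
\frac{G(\theta)-G(\theta_{0})}{\theta-\theta_{0}}
-\int_{\omega_{0}}^{s}\partial_{2}g(t,\theta_{0})\,d_{q,\omega}t
= \bigl(s(1-q)-\omega\bigr)\sum_{k=0}^{+\infty} q^{k}\,R_{k}(\theta),
\]
where $R_{k}(\theta) := \dfrac{g(t_{k},\theta)-g(t_{k},\theta_{0})}{\theta-\theta_{0}}-\partial_{2}g(t_{k},\theta_{0})$.

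Next, given $\varepsilon>0$, I would invoke the uniform differentiability hypothesis to pick $\delta>0$ such that $|R_{k}(\theta)|<\varepsilon$ for every $k\in\mathbb{N}_{0}$ whenever $0<|\theta-\theta_{0}|<\delta$; this is precisely where uniformity in $[s]_{q,\omega}$ is used. Taking absolute values term by term and summing the geometric series,
\[
\left|\frac{G(\theta)-G(\theta_{0})}{\theta-\theta_{0}}
-\int_{\omega_{0}}^{s}\partial_{2}g(t,\theta_{0})\,d_{q,\omega}t\right|
\le |s(1-q)-\omega|\sum_{k=0}^{+\infty} q^{k}\varepsilon
= \frac{|s(1-q)-\omega|}{1-q}\,\varepsilon,
\]
which is an arbitrarily small constant times $\varepsilon$. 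Letting $\varepsilon\to 0$ yields that $G$ is differentiable at $\theta_{0}$ with the claimed derivative.

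The only delicate point is the termwise estimate: one would normally need to justify swapping the limit $\theta\to\theta_{0}$ with the infinite sum, and this is exactly what the hypothesis of \emph{uniform} differentiability in $[s]_{q,\omega}$ is engineered to deliver, since the same $\delta$ works simultaneously at every node $t_{k}$. Without uniformity the termwise bound would depend on $k$ and the geometric summation trick would fail; with it, the argument reduces to elementary manipulations of convergent series, and Remark~\ref{rem:diff:int} (the absence of the naive triangle inequality for the $q,\omega$-integral) is bypassed because we work on the series side, where the ordinary triangle inequality applies.
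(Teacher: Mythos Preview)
The paper does not supply its own proof of this lemma; it is quoted verbatim from reference~\cite{MalinowskaTorres} as a preliminary fact, so there is nothing in the present paper to compare your argument against. That said, your proof is correct and is essentially the canonical one: expand the $q,\omega$-integral as its defining series, combine the three convergent series for $G(\theta)$, $G(\theta_{0})$, and $\int_{\omega_{0}}^{s}\partial_{2}g(t,\theta_{0})\,d_{q,\omega}t$ into the single series $\sum_{k}q^{k}R_{k}(\theta)$, use uniform differentiability on $[s]_{q,\omega}$ to bound every $|R_{k}(\theta)|$ by the same $\varepsilon$, and sum the geometric factor. Your closing remark about why one must work on the series side rather than invoke an integral triangle inequality (cf.\ Remark~\ref{rem:diff:int}) is exactly the point; the constant in your final bound simplifies to $|s-\omega_{0}|\,\varepsilon$, which makes the conclusion immediate.
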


Let $a,b\in I$ with $a< b$. Recall that $I$ is an interval containing $\omega_0$.
We define the $q,\omega$-interval by
$$[a,b]_{q,\omega}:=\{q^na+\omega[n]_{q}:
n\in\mathbb{N}_{0}\}\cup\{q^nb+\omega[n]_{q}:
n\in\mathbb{N}_{0}\}\cup\{\omega_0\},$$
i.e., $[a,b]_{q,\omega} =\left[  a\right]_{q,\omega}\cup \left[  b\right]_{q,\omega}$.

\vspace{0.3 cm}

For $r \in\mathbb{N}$ we introduce the linear space
$\mathcal{Y}^{r} = \mathcal{Y}^{r}\left(\left[a,b\right],\mathbb{R}\right)$ by
$$
\mathcal{Y}^{r} :=
\left\{  y: I  \rightarrow \mathbb{R}\, |\,
D_{q,\omega}^{i}[y], i = 0,\ldots, r,
\text{ are bounded on $[a,b]$ and continuous at } \omega_{0}\right\}
$$
endowed with the norm
$$\left\Vert y\right\Vert_{r,\infty}:=\sum_{i=0}^{r}\left\Vert D_{q,\omega}
^{i}\left[  y\right]  \right\Vert_{\infty},$$
where
$\left\Vert y\right\Vert_{\infty}:=\sup_{t\in\left[  a,b\right]  }\left\vert
y\left(  t\right)  \right\vert$.

\begin{lemma}(\cite{MalinowskaTorres} Fundamental Lemma of the Hahn quantum variational calculus)
\label{lemma:DR} Let $f\in \mathcal{Y}^{0}.$ One has
$\int_{a}^{b}f(t)h(qt+\omega)d_{q,\omega}t=0$ for all functions
$h\in \mathcal{Y}^{0}$ with $h(a)=h(b)=0$ if and only if $f(t)=0$ for
all $t\in[a,b]_{q,\omega}$.
\end{lemma}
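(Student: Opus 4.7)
My plan is to prove both implications of the equivalence separately.

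For sufficiency ($\Leftarrow$), I would expand the integral directly from the defining series. Using $\sigma(xq^{k}+\omega[k]_q)=xq^{k+1}+\omega[k+1]_q$ together with $\int_a^b=\int_{\omega_0}^b-\int_{\omega_0}^a$ and factoring $x(1-q)-\omega=(1-q)(x-\omega_0)$, one gets
\[
\int_{a}^{b}f(t)h(qt+\omega)\,d_{q,\omega}t=(1-q)(b-\omega_0)\sum_{k=0}^{\infty}q^{k}f(\sigma^{k}(b))h(\sigma^{k+1}(b))-(1-q)(a-\omega_0)\sum_{k=0}^{\infty}q^{k}f(\sigma^{k}(a))h(\sigma^{k+1}(a)).
\]
Every sample point $\sigma^k(a),\sigma^k(b)$ lies in $[a,b]_{q,\omega}$, so if $f$ vanishes on that set each term vanishes.

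For necessity ($\Rightarrow$), I would argue by contrapositive: assume $f(t^{*})\neq 0$ for some $t^{*}\in[a,b]_{q,\omega}$ and exhibit an admissible test function $h$ making the integral nonzero. The first step is a reduction to $t^{*}\neq\omega_0$. Since $f\in\mathcal{Y}^{0}$ is continuous at $\omega_0$, if $f(\omega_0)\neq 0$ then $f$ has constant sign on some neighborhood of $\omega_0$; because $a<b$, at least one of $a,b$ differs from $\omega_0$, the corresponding iterates $\sigma^{k}(\cdot)$ contract to $\omega_0$, and so for some index $k_0$ the point $\sigma^{k_0}(a)$ or $\sigma^{k_0}(b)$ lies in that neighborhood and inherits a nonzero value, which we rename $t^{*}$.

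Assume without loss of generality $t^{*}=\sigma^{k_0}(a)$ with $a\neq\omega_0$ (the case $t^{*}=\sigma^{k_0}(b)$ is analogous). Set $p^{*}:=\sigma(t^{*})=\sigma^{k_0+1}(a)$; then $p^{*}\notin\{a,b,\omega_0\}$, and in the generic situation that the $\sigma$-orbits of $a$ and $b$ meet only at $\omega_0$, $p^{*}$ is isolated from the countable set $\{\sigma^{k}(a)\}_{k\neq k_0+1}\cup\{\sigma^{k}(b)\}_{k\geq 0}$, whose only accumulation point is $\omega_0$. Pick an open neighborhood $U$ of $p^{*}$ disjoint from all those exceptional points and from $\{a,b,\omega_0\}$, and take $h$ to be any continuous bump supported in $U$ with $h(p^{*})=f(t^{*})$, extended by zero on the rest of $I$. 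Such an $h$ is bounded on $[a,b]$ and vanishes in a whole neighborhood of $\omega_0$, so $h\in\mathcal{Y}^{0}$; moreover $h(a)=h(b)=0$. Substituting into the expansion, every term drops out except the one indexed by $k=k_0$ in the $a$-sum, giving
\[
\int_{a}^{b}f(t)h(qt+\omega)\,d_{q,\omega}t=-(1-q)(a-\omega_0)\,q^{k_0}\,[f(t^{*})]^{2}\neq 0,
\]
contradicting the hypothesis.

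The main obstacle I anticipate is precisely the reduction away from the accumulation point $\omega_0$: the integral only samples $f$ on the sequences $\{\sigma^{k}(a)\}$ and $\{\sigma^{k}(b)\}$ and never at $\omega_0$ itself, so no choice of $h$ can directly detect $f(\omega_0)$. Continuity of $f$ at $\omega_0$, which is built into the definition of $\mathcal{Y}^{0}$, is the lever that transfers the information between the orbit points and their common limit. A minor additional subtlety is the degenerate configuration in which an endpoint equals $\omega_0$ (so one prefactor $(1-q)(x-\omega_0)$ vanishes and one series drops out of the expansion); in that case the argument simply runs through the surviving orbit.
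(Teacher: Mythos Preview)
The paper does not prove this lemma; it is quoted as a preliminary result from \cite{MalinowskaTorres}, so there is no in-paper proof to compare your attempt against.

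On its own merits, your argument for the easy direction ($\Leftarrow$) is correct, and your contrapositive strategy for ($\Rightarrow$) via a single-spike test function is the natural one. The reduction from $t^{*}=\omega_0$ to a nearby orbit point using continuity of $f$ at $\omega_0$ is exactly the right use of the hypothesis $f\in\mathcal{Y}^{0}$. When the $\sigma$-orbits of $a$ and $b$ meet only at $\omega_0$ --- in particular whenever $a\le\omega_0\le b$, which is the configuration tacitly assumed in the cited source and in all applications in this paper --- your isolation argument and bump construction go through cleanly.

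Your caveat about the ``generic situation'' is not a removable technicality, however. If one endpoint lies on the $\sigma$-orbit of the other, say $a=\sigma(b)$ with $b>\omega_0$, then the two series in your expansion telescope and the entire integral collapses to the single term $(1-q)(b-\omega_0)f(b)h(a)$, which vanishes for every admissible $h$ irrespective of $f$; the stated equivalence therefore fails outright in that configuration. So the non-generic case cannot be repaired by a cleverer choice of $h$: one must import an additional hypothesis on the position of $a$ and $b$ relative to $\omega_0$ (as the original reference does). You should make that hypothesis explicit rather than leave it hidden behind the word ``generic''.
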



\section{Main results}
\label{sec:mr}

The main purpose of this paper is to generalize
the Hahn Calculus of Variations \cite{MalinowskaTorres} by considering the following $q,\omega$-variational problem
\begin{equation}
\label{P}
\mathcal{L}\left[  y\right] =  \int_{a}^{b}L\left(
t,y\left(  qt+\omega\right)  ,D_{q,\omega}\left[ y\right]
\left(  t\right)  ,y(a), y(b)
\right)  d_{q,\omega}t \longrightarrow \textrm{extr}\\
\end{equation}
where ``extr'' denotes ``extremize'' (\textrm{i.e.}, minimize or maximize).
In Subsection~\ref{subsection E-L} we obtain the Euler--Lagrange
equation for problem (\ref{P}) in the class of functions
$y \in \mathcal{Y}^{1}$ satisfying the boundary conditions
\begin{equation}
\label{boundary conditions}
y(a)=\alpha \quad \mbox{ and } \quad y(b)=\beta
\end{equation}
for some fixed $\alpha, \beta \in \mathbb{R}$. The transversality
conditions for problem (\ref{P}) are obtained in
Subsection~\ref{subsection natural boundary}.
In Subsection~\ref{isoperimetric problem} we prove necessary optimality conditions for isoperimetric problems.
A sufficient optimality condition under an appropriate convexity assumption is given in Subsection~\ref{sufficient_condition}

\vspace{0.3 cm}

\begin{definition}
A function $y\in \mathcal{Y}^{1}$ is
said to be admissible for (\ref{P})--(\ref{boundary conditions}) if it satisfies the endpoint conditions
\eqref{boundary conditions}. We say that $h\in \mathcal{Y}^{1}$ is an admissible variation for (\ref{P})--(\ref{boundary conditions})
if $h(a)=h(b)=0$.
\end{definition}

In the sequel we assume that the Lagrangian $L$ satisfies the following hypotheses:
\begin{enumerate}
\item[(H1)] $(u_0, \ldots, u_3)\rightarrow L(t,u_0, \ldots, u_3)$
is a $C^1(\mathbb{R}^{4}, \mathbb{R})$ function for any $t \in I$;

\item[(H2)] $t \rightarrow L(t, y(qt+\omega), D_{q,\omega}\left[  y\right](t),y(a), y(b))$ is continuous
at $\omega_0$ for any $y\in \mathcal{Y}^1$;

\item[(H3)]  functions $t \rightarrow \partial_{i+2}L(t, y(qt+\omega),
D_{q,\omega}\left[  y\right](t),  y(a), y(b))$,
$i=0,\cdots,3$  belong to
$\mathcal{Y}^{1}$
for all  $y \in \mathcal{Y}^1$.
\end{enumerate}

\begin{definition}
We say that $y_{\ast}$ is a local minimizer (resp. local maximizer) for problem
(\ref{P})--(\ref{boundary conditions}) if $y_{\ast}$ is an admissible function and there exists $\delta>0$ such that
\[
\mathcal{L}\left[  y_{\ast}\right]  \leq\mathcal{L}\left[  y\right]
\text{ \ \ (resp. }\mathcal{L}\left[  y_{\ast}\right]  \geq
\mathcal{L}\left[  y\right]  \text{) }
\]
for all admissible $y$ with
$\left\Vert y_{\ast}-y\right\Vert_{1,\infty}<\delta$.
\end{definition}

For fixed $y, h \in \mathcal{Y}^1$, we define the real function
$\phi$
by
$$
\phi(\varepsilon)
:=\mathcal{L}[y + \varepsilon h].
$$
The first variation for problem
\eqref{P} is defined by
$$\delta\mathcal{L}[y,h]:=\phi'(0).$$
Observe that,
\begin{equation*}
\begin{split}
\mathcal{L}&[y + \varepsilon h]=\int_a^b L(t,y(qt+\omega)+\varepsilon h(qt+\omega),D_{q,\omega}[y](t)
+ \varepsilon D_{q,\omega}[h](t), y(a) + \varepsilon h(a),\\
& y(b)+ \varepsilon h(b)) d_{q,\omega} t
=\int_{\omega_0}^b L(t,y(qt+\omega)+\varepsilon
h(qt+\omega),D_{q,\omega}[y](t)+ \varepsilon D_{q,\omega}[h](t),\\
&y(a)
+ \varepsilon h(a), y(b)+ \varepsilon h(b))
d_{q,\omega} t-\int_{\omega_0}^a L(t,y(qt+\omega)+\varepsilon
h(qt+\omega),D_{q,\omega}[y](t)\\ &+
\varepsilon D_{q,\omega}[h](t),
y(a) + \varepsilon h(a), y(b)+ \varepsilon h(b))
d_{q,\omega} t.
\end{split}
\end{equation*}
Writing
\begin{equation*}
\begin{split}
\mathcal{L}_b&[y + \varepsilon h]
=\int_{\omega_0}^b
L(t,y(qt+\omega)+\varepsilon h(qt+\omega),D_{q,\omega}[y](t)
+ \varepsilon D_{q,\omega}[h](t), y(a) + \varepsilon h(a),\\
& y(b)+ \varepsilon h(b)) d_{q,\omega} t
\end{split}
\end{equation*}
and
\begin{equation*}
\begin{split}
\mathcal{L}_a&[y + \varepsilon h]
=\int_{\omega_0}^a
L(t,y(qt+\omega)+\varepsilon h(qt+\omega),D_{q,\omega}[y](t)
+ \varepsilon D_{q,\omega}[h](t), y(a) + \varepsilon h(a),\\
& y(b)+ \varepsilon h(b)) d_{q,\omega} t,
\end{split}
\end{equation*}
we have $$\mathcal{L}[y + \varepsilon h]=\mathcal{L}_b[y +
\varepsilon h]-\mathcal{L}_a[y + \varepsilon h].$$ Therefore,
\begin{equation}\label{var}
\delta\mathcal{L}[y,h]=\delta\mathcal{L}_b[y,h]-\delta\mathcal{L}_a[y,
h].
\end{equation}

In order to simplify expressions, we  introduce the operator
$\{\cdot\}$ defined in the following way:
$$
\{y\}(t) :=(t, y(qt+\omega), D_{q,\omega}[y](t), y(a), y(b))
$$
where $y \in \mathcal{Y}^1$.

Knowing \eqref{var}, the following lemma is a direct consequence of
Lemma~\ref{derivada do integral}.

\begin{lemma}
\label{asump}
For fixed $y, h \in \mathcal{Y}^1$ let
$$g(t,\varepsilon)=L(t,y(qt+\omega)+\varepsilon
h(qt+\omega),D_{q,\omega}[y](t)+ \varepsilon D_{q,\omega}[h](t), y(a)+\varepsilon h(a), y(b)+ \varepsilon h(b))$$ for
$\varepsilon \in ]-\overline{\varepsilon}, \overline{\varepsilon}[$, for some $\overline{\varepsilon}>0$,
i.e.,
$$
g(t,\varepsilon)=L\{y+\varepsilon h\}(t).
$$
Assume
that:
\begin{itemize}
\item[(i)] $g(t,\cdot)$ is differentiable at $0$ uniformly in
$t \in [a,b]_{q,\omega}$;
\item[(ii)] $\mathcal{L}_a[y + \varepsilon h]=\displaystyle\int_{\omega_{0}}^{a}g\left(
t,\epsilon\right)  d_{q,\omega}t $ and $\mathcal{L}_b[y + \varepsilon
h]=\displaystyle\int_{\omega_{0}}^{b}g\left(
t,\epsilon\right)  d_{q,\omega}t $ exist for $\varepsilon\approx0$;
\item[(iii)] $\displaystyle \int_{\omega_0}^a\partial_2g(t,0)d_{q,\omega}t$ and
$\displaystyle \int_{\omega_0}^b\partial_2g(t,0)d_{q,\omega}t$ exist.
\end{itemize}
Then,
\begin{equation*}
\begin{split}
\delta\mathcal{L}&[y,h]
=\int_a^b \Big( \partial_2
L\{y\}(t)\cdot h(qt+\omega)
+ \partial_3 L\{y\}(t)\cdot
D_{q,\omega}[h](t) + \partial_4 L\{y\}(t)\cdot h(a) \\
&+ \partial_5 L\{y\}(t)\cdot h(b)\Big)d_{q,\omega}t.
\end{split}
\end{equation*}
\end{lemma}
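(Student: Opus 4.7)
The plan is to apply Lemma~\ref{derivada do integral} separately to the two pieces $\mathcal{L}_a[y+\varepsilon h]$ and $\mathcal{L}_b[y+\varepsilon h]$, and then combine them via the decomposition \eqref{var} and item~\ref{eq:item3} of Theorem~\ref{Propriedades do integral}. The heart of the argument is therefore purely a chain-rule computation; the hypotheses (i)--(iii) are tailored precisely to license differentiation under the $q,\omega$-integral sign.

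First I would compute $\partial_2 g(t,0)$. Since $g(t,\varepsilon) = L\{y+\varepsilon h\}(t)$ and, by (H1), $L$ is $C^1$ in its last four arguments, the classical chain rule for a smooth function composed with the affine paths $\varepsilon\mapsto y(qt+\omega)+\varepsilon h(qt+\omega)$, $\varepsilon\mapsto D_{q,\omega}[y](t)+\varepsilon D_{q,\omega}[h](t)$, $\varepsilon\mapsto y(a)+\varepsilon h(a)$, and $\varepsilon\mapsto y(b)+\varepsilon h(b)$ yields
\begin{equation*}
\begin{split}
\partial_2 g(t,0) = {}&\partial_2 L\{y\}(t)\cdot h(qt+\omega) + \partial_3 L\{y\}(t)\cdot D_{q,\omega}[h](t)\\
&+ \partial_4 L\{y\}(t)\cdot h(a) + \partial_5 L\{y\}(t)\cdot h(b).
\end{split}
\end{equation*}

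Next, since $[a,b]_{q,\omega}=[a]_{q,\omega}\cup[b]_{q,\omega}$, the uniform differentiability assumption (i) implies in particular that $g(t,\cdot)$ is differentiable at $0$ uniformly on $[a]_{q,\omega}$ and uniformly on $[b]_{q,\omega}$. Combined with the existence assumptions (ii) and (iii), Lemma~\ref{derivada do integral} applies to both $G_b(\varepsilon):=\mathcal{L}_b[y+\varepsilon h]=\int_{\omega_0}^b g(t,\varepsilon)\,d_{q,\omega}t$ and $G_a(\varepsilon):=\mathcal{L}_a[y+\varepsilon h]=\int_{\omega_0}^a g(t,\varepsilon)\,d_{q,\omega}t$, giving
\[
G_b'(0)=\int_{\omega_0}^b \partial_2 g(t,0)\,d_{q,\omega}t,\qquad G_a'(0)=\int_{\omega_0}^a \partial_2 g(t,0)\,d_{q,\omega}t.
\]

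Finally, invoking \eqref{var}, the definition of the $q,\omega$-integral from $a$ to $b$, and item~\ref{eq:item3} of Theorem~\ref{Propriedades do integral}, I obtain
\[
\delta\mathcal{L}[y,h] = G_b'(0) - G_a'(0) = \int_{\omega_0}^b \partial_2 g(t,0)\,d_{q,\omega}t - \int_{\omega_0}^a \partial_2 g(t,0)\,d_{q,\omega}t = \int_a^b \partial_2 g(t,0)\,d_{q,\omega}t,
\]
and substituting the chain-rule expression for $\partial_2 g(t,0)$ yields the stated formula. The only conceptual subtlety — and the closest thing to an obstacle — is checking that assumption (i), phrased on $[a,b]_{q,\omega}$, transfers correctly to both component sets $[a]_{q,\omega}$ and $[b]_{q,\omega}$ so that Lemma~\ref{derivada do integral} is legitimately applicable to each piece; this is immediate from the definition of $[a,b]_{q,\omega}$ as the union of the two.
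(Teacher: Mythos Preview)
Your argument is correct and follows precisely the route the paper indicates: the paper does not give a detailed proof but states that the lemma is ``a direct consequence of Lemma~\ref{derivada do integral}'' together with the decomposition \eqref{var}, and your write-up simply makes that consequence explicit by applying Lemma~\ref{derivada do integral} to each of $\mathcal{L}_a$ and $\mathcal{L}_b$ and subtracting. The chain-rule computation of $\partial_2 g(t,0)$ and the observation that uniform differentiability on $[a,b]_{q,\omega}=[a]_{q,\omega}\cup[b]_{q,\omega}$ restricts to each component are exactly the details needed.
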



\subsection{The Hahn Quantum Euler--Lagrange equation}
\label{subsection E-L}

\begin{theorem}(Necessary optimality condition
to (\ref{P})--(\ref{boundary conditions}))
\label{E-L}Under hypotheses (H1)--(H3) and conditions (i)--(iii)
of Lemma~\ref{asump} on the Lagrangian $L$, if
 $\tilde{y}$ is  a local minimizer
or local maximizer to problem
(\ref{P})--(\ref{boundary conditions}), then
 $\tilde{y}$ satisfies the Euler--Lagrange equation
\begin{equation}
\label{E-L equation}
\partial_2
L\{y\}(t) - D_{q,\omega} [\partial_3 L]\{y\}(t)=0
\end{equation}
for all $t\in\left[a,b\right]_{q,\omega}$.
\end{theorem}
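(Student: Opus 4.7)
The plan is to carry out the classical first-variation argument adapted to the Hahn quantum setting, taking care of the new terms coming from the dependence of $L$ on $y(a)$ and $y(b)$. Since $\tilde{y}$ is a local extremizer among admissible functions and (\ref{boundary conditions}) fixes $y(a)=\alpha$, $y(b)=\beta$, any admissible variation $h\in\mathcal{Y}^1$ must satisfy $h(a)=h(b)=0$. For such $h$ the real function $\phi(\varepsilon):=\mathcal{L}[\tilde{y}+\varepsilon h]$ has a local extremum at $\varepsilon=0$, so $\phi'(0)=0$, i.e. $\delta\mathcal{L}[\tilde{y},h]=0$. Hypotheses (H1)--(H3) together with (i)--(iii) let me invoke Lemma~\ref{asump}, giving
\[
\int_{a}^{b}\Bigl(\partial_{2}L\{\tilde{y}\}(t)\, h(qt+\omega)+\partial_{3}L\{\tilde{y}\}(t)\, D_{q,\omega}[h](t)+\partial_{4}L\{\tilde{y}\}(t)\, h(a)+\partial_{5}L\{\tilde{y}\}(t)\, h(b)\Bigr)d_{q,\omega}t=0.
\]

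The second step is to use $h(a)=h(b)=0$ to discard the last two terms (they are scalars $h(a)$, $h(b)$ multiplying the integrands, so they vanish identically) and then to transfer the $D_{q,\omega}$ off $h$ in the middle term by means of the $q,\omega$-integration by parts formula (Property~\ref{itm:ip} of Theorem~\ref{Propriedades do integral}), with $f=\partial_{3}L\{\tilde{y}\}$ and $g=h$:
\[
\int_{a}^{b}\partial_{3}L\{\tilde{y}\}(t)\, D_{q,\omega}[h](t)\, d_{q,\omega}t=\bigl[\partial_{3}L\{\tilde{y}\}(t)\, h(t)\bigr]_{a}^{b}-\int_{a}^{b}D_{q,\omega}[\partial_{3}L]\{\tilde{y}\}(t)\, h(qt+\omega)\, d_{q,\omega}t.
\]
Hypothesis (H3) guarantees that $t\mapsto\partial_{3}L\{\tilde{y}\}(t)$ lies in $\mathcal{Y}^{1}$, so the integration by parts is legitimate, and the boundary term vanishes once more because $h(a)=h(b)=0$. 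A pleasant feature of the Hahn setting is that the shifted argument $h(qt+\omega)$ produced by the integration by parts matches exactly the argument of $h$ appearing in the first summand, so I can factor $h(qt+\omega)$ out of the whole remaining integrand.

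Putting the pieces together yields
\[
\int_{a}^{b}\Bigl(\partial_{2}L\{\tilde{y}\}(t)-D_{q,\omega}[\partial_{3}L]\{\tilde{y}\}(t)\Bigr)\, h(qt+\omega)\, d_{q,\omega}t=0
\]
for every admissible variation $h\in\mathcal{Y}^{1}$ with $h(a)=h(b)=0$. Since, by (H3), the bracketed expression lies in $\mathcal{Y}^{0}$, the Fundamental Lemma of the Hahn quantum variational calculus (Lemma~\ref{lemma:DR}) forces it to vanish pointwise on $[a,b]_{q,\omega}$, which is precisely (\ref{E-L equation}).

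I do not expect a serious conceptual obstacle in this proof; the main care is bookkeeping. One must verify the hypotheses of Lemma~\ref{asump} (differentiability of the integrand under the $q,\omega$-integral, existence of $\mathcal{L}_{a}$ and $\mathcal{L}_{b}$ for small $\varepsilon$), check that $\partial_{3}L\{\tilde{y}\}\in\mathcal{Y}^{1}$ so that integration by parts applies, and note that the terms containing $\partial_{4}L$ and $\partial_{5}L$ are killed by $h(a)=h(b)=0$. That last observation is exactly what makes the classical Euler--Lagrange equation survive intact in this more general framework, and it also explains why the dependence on $y(a)$, $y(b)$ will only manifest itself in the transversality conditions derived in Subsection~\ref{subsection natural boundary}, where the endpoints are free.
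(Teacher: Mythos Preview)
Your proof is correct and follows essentially the same approach as the paper: set up $\phi(\varepsilon)=\mathcal{L}[\tilde{y}+\varepsilon h]$, use Lemma~\ref{asump} to compute $\phi'(0)$, kill the $\partial_4 L$ and $\partial_5 L$ terms via $h(a)=h(b)=0$, integrate the $\partial_3 L$ term by parts, and apply the Fundamental Lemma~\ref{lemma:DR}. Your write-up is in fact slightly more explicit than the paper's in justifying why integration by parts and Lemma~\ref{lemma:DR} apply (via (H3)).
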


\begin{proof}
Suppose that $\mathcal{L}$ has a local extremum at $\tilde{y}$. Let
$h$ be any admissible variation and define a function
$\phi : \, ]-\bar{\varepsilon},\bar{\varepsilon}[\rightarrow \mathbb{R}$
by $\phi(\varepsilon) = \mathcal{L}[\tilde{y} + \varepsilon h]$.
A necessary condition for $\tilde{y}$ to be
an extremizer is given  by $\phi^\prime(0)=0$.
Note that
\begin{equation*}
\begin{split}
\label{eq:FT} \phi^\prime(0)&=
\int_a^b \Big( \partial_2
L\{\tilde{y}\}(t)\cdot h(qt+\omega)
+ \partial_3 L\{\tilde{y}\}(t)\cdot
D_{q,\omega}[h](t) + \partial_4 L\{\tilde{y}\}(t)\cdot h(a) \\
&+ \partial_5 L\{\tilde{y}\}(t)\cdot h(b)\Big)d_{q,\omega}t.
\end{split}
\end{equation*}
Since $h(a) = h(b)= 0$, then
$$\phi^\prime(0)=
\int_a^b \Big( \partial_2
L\{\tilde{y}\}(t)\cdot h(qt+\omega)
+ \partial_3 L\{\tilde{y}\}(t)\cdot
D_{q,\omega}[h](t)\Big)d_{q,\omega}t.$$
Integration by parts
gives
\begin{equation*}
\int_a^b  \partial_3 L\{\tilde{y}\}(t)\cdot
D_{q,\omega}[h](t)d_{q,\omega}t= \Big[\partial_3 L\{\tilde{y}\}(t)\cdot h(t)\Big]^b_a - \int_a^b  D_{q,\omega} [\partial_3 L]\{\tilde{y}\}(t)\cdot
h(qt+\omega)d_{q,\omega}t
\end{equation*}
and since $h(a) = h(b)= 0$, then
\begin{equation*}
\phi^\prime(0)=0\Leftrightarrow
\int_a^b \Big( \partial_2
L\{\tilde{y}\}(t) - D_{q,\omega} [\partial_3 L]\{\tilde{y}\}(t)\Big) \cdot h(qt+\omega)d_{q,\omega}t =0.
\end{equation*}
Thus, by
Lemma~\ref{lemma:DR}, we have
\begin{equation*}
\partial_2
L\{\tilde{y}\}(t) - D_{q,\omega} [\partial_3 L]\{\tilde{y}\}(t)=0
\end{equation*}
for all $t\in[a,b]_{q,\omega}$.
\end{proof}

\begin{remark}
Under appropriate conditions, when $(\omega, q)\rightarrow (0, 1)$,
we obtain a corresponding result in the classical context of the
calculus of variations \cite{JP:DT:AZ} (see also \cite{MalTor}):
$$\frac{d}{dt}\partial_3 L(t,y(t),y^\prime(t),y(a),y(b))= \partial_2 L(t,y(t),y^\prime(t),y(a),y(b)).$$
\end{remark}

\begin{remark} In the basic problem of the calculus of variations, $L$ does not depend on $y(a)$ and $y(b)$,
and equation (\ref{E-L equation}) reduces
to the Hahn quantum Euler--Lagrange equation presented in
\cite{MalinowskaTorres}.
\end{remark}

\begin{remark}
In practical terms the hypotheses of Theorem~\ref{E-L} are not easy
to verify \emph{a priori}. However, we can assume that all
hypotheses are satisfied and apply the $q,\omega$-Euler--Lagrange
equation \eqref{E-L equation} heuristically to obtain a
\emph{candidate}. If such a candidate is, or not, a solution to the
variational problem is a different question that require further
analysis (see \S\ref{sufficient_condition} and Section~\ref{sec:Ex}).
\end{remark}


\subsection{Natural boundary conditions}
\label{subsection natural boundary}

\begin{theorem}(Natural boundary conditions to (\ref{P}))
\label{Theorem natural boundary conditions}
Under hypotheses (H1)--(H3) and conditions (i)--(iii)
of Lemma~\ref{asump} on the Lagrangian $L$, if
 $\tilde{y}$ is  a local minimizer
or local maximizer to problem
(\ref{P}),
then $\tilde{y}$ satisfies
the Euler--Lagrange equation
(\ref{E-L equation}) and
\begin{enumerate}
\item  if  $y(a)$ is free,  then the natural boundary condition
\begin{equation}
\label{a}
\partial_3 L\{\tilde{y}\}(a)= \int_a^b \partial_{4}L\{\tilde{y}\}(t)d_{q,\omega}t
\end{equation}
holds;
\item if $y(b)$ is free, then the natural boundary condition
\begin{equation}
\label{b}
\partial_3 L\{\tilde{y}\}(b)= -\int_a^b \partial_{5}L\{\tilde{y}\}(t)d_{q,\omega}t
\end{equation}
holds.
\end{enumerate}
\end{theorem}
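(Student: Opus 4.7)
The plan is to repeat the derivation used for Theorem~\ref{E-L} but without enforcing $h(a)=h(b)=0$ on the admissible variations. Since problem \eqref{P} has no prescribed boundary values, every $h\in\mathcal{Y}^1$ is now an admissible variation, so the condition $\phi'(0)=0$ for $\phi(\varepsilon)=\mathcal{L}[\tilde y+\varepsilon h]$ must hold for a strictly larger class of test functions than before.

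First I would apply Lemma~\ref{asump} to write
\begin{equation*}
\begin{split}
\phi'(0) = \int_a^b \Big(& \partial_2 L\{\tilde y\}(t)\cdot h(qt+\omega) + \partial_3 L\{\tilde y\}(t)\cdot D_{q,\omega}[h](t) \\
& + \partial_4 L\{\tilde y\}(t)\cdot h(a) + \partial_5 L\{\tilde y\}(t)\cdot h(b)\Big)\,d_{q,\omega}t,
\end{split}
\end{equation*}
and then use the $q,\omega$-integration by parts formula (Theorem~\ref{Propriedades do integral}(\ref{itm:ip})) on the $D_{q,\omega}[h]$ term exactly as in the proof of Theorem~\ref{E-L}. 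Pulling out the boundary term $[\partial_3 L\{\tilde y\}(t)\cdot h(t)]_a^b$ and collecting constants $h(a)$ and $h(b)$ outside the integrals, $\phi'(0)=0$ becomes
\begin{equation*}
\begin{split}
0 = &\int_a^b \bigl(\partial_2 L\{\tilde y\}(t)-D_{q,\omega}[\partial_3 L]\{\tilde y\}(t)\bigr)\,h(qt+\omega)\,d_{q,\omega}t \\
& + h(b)\Bigl(\partial_3 L\{\tilde y\}(b) + \int_a^b \partial_5 L\{\tilde y\}(t)\,d_{q,\omega}t\Bigr) \\
& - h(a)\Bigl(\partial_3 L\{\tilde y\}(a) - \int_a^b \partial_4 L\{\tilde y\}(t)\,d_{q,\omega}t\Bigr).
\end{split}
\end{equation*}

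Next I would specialize the test function in three stages. First, restricting to $h$ with $h(a)=h(b)=0$ reduces the identity to the hypothesis of Theorem~\ref{E-L} and yields the Euler--Lagrange equation \eqref{E-L equation} on $[a,b]_{q,\omega}$; this simultaneously annihilates the remaining integral for \emph{every} admissible $h$. What is left is
\[
h(b)\Bigl(\partial_3 L\{\tilde y\}(b)+\int_a^b\partial_5 L\{\tilde y\}(t)\,d_{q,\omega}t\Bigr)=h(a)\Bigl(\partial_3 L\{\tilde y\}(a)-\int_a^b\partial_4 L\{\tilde y\}(t)\,d_{q,\omega}t\Bigr).
\]
In case~1, when $y(a)$ is free, I choose a variation with $h(a)\neq 0$ and $h(b)=0$ (if $y(b)$ is prescribed, such $h$ is admissible; if $y(b)$ is also free, pick $h(b)=0$ anyway); the arbitrariness of $h(a)$ gives \eqref{a}. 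In case~2, an analogous choice with $h(a)=0$ and $h(b)\neq 0$ produces \eqref{b}.

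The only subtle point is justifying that the class of variations is rich enough to select $h(a)$ and $h(b)$ independently while keeping $h\in\mathcal{Y}^1$; this is routine since one can take, e.g., affine functions in $t$ which clearly lie in $\mathcal{Y}^1$. Everything else is a direct application of the machinery already set up for Theorem~\ref{E-L}, so I do not expect a genuine obstacle beyond careful bookkeeping of the boundary terms arising from $q,\omega$-integration by parts.
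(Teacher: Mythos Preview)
Your proposal is correct and follows essentially the same approach as the paper: derive the full first-variation identity via Lemma~\ref{asump} and $q,\omega$-integration by parts, first restrict to variations with $h(a)=h(b)=0$ to obtain the Euler--Lagrange equation (which then kills the integral term for all $h$), and finally isolate the boundary coefficients by choosing $h$ with $h(b)=0$ or $h(a)=0$. Your write-up is in fact slightly more explicit than the paper's about why the integral term vanishes for general $h$ once \eqref{E-L equation} is established, and about the availability of suitable test functions in $\mathcal{Y}^1$.
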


\begin{proof}
Suppose that $\tilde{y}$ is a local minimizer (resp. maximizer) to
problem (\ref{P}). Let $h$ be any $\mathcal{Y}^1$ function. Define a
function $\phi : \,
]-\bar{\varepsilon},\bar{\varepsilon}[\rightarrow \mathbb{R}$ by
$\phi(\varepsilon) = \mathcal{L}[\tilde{y} + \varepsilon h]$. It is
clear that a necessary condition for $\tilde{y}$ to be an extremizer
is given by $\phi^{\prime}\left(  0\right)  =0$.
From the arbitrariness of $h$ and using similar arguments
as the ones used in the proof of Theorem~\ref{E-L}, it can be proved that
$\tilde{y}$ satisfies the Euler--Lagrange equation (\ref{E-L equation}).

\begin{enumerate}

\item Suppose now that $y(a)$ is free.
If $y(b)=\beta$ is given, then $h(b)=0$;
if $y(b)$ is free, then we restrict ourselves
to those $h$ for which $h(b)=0$.
Therefore,
\begin{equation}
\label{boundary condition a}
\begin{split}
0 &=  \phi^\prime(0)\\
&= \int_a^b \Big( \partial_2
L\{\tilde{y}\}(t) - D_{q,\omega} [\partial_3 L]\{\tilde{y}\}(t)\Big) \cdot h(qt+\omega)d_{q,\omega}t\\
& \quad + \displaystyle \Big( \int_a^b \partial_4 L\{\tilde{y}\}(t) d_{q,\omega}t - \partial_3 L\{\tilde{y}\}(a)\Big)\cdot h(a)=0.
\end{split}
\end{equation}
Using the Euler--Lagrange equation (\ref{E-L equation})
into (\ref{boundary condition a}) we obtain
$$
\displaystyle \Big( \int_a^b \partial_4 L\{\tilde{y}\}(t) d_{q,\omega}t - \partial_3 L\{\tilde{y}\}(a)\Big)\cdot h(a)=0.
$$
From the arbitrariness of $h$ it follows that
$$
\partial_3 L\{\tilde{y}\}(a) =\int_a^b \partial_4 L\{\tilde{y}\}(t) d_{q,\omega}t.
$$
\item Suppose now that $y(b)$ is free.
If $y(a)=\alpha$, then $h(a)=0$; if $y(a)$ is
 free, then we restrict ourselves to those $h$ for which $h(a)=0$.
Thus,
\begin{equation}
\label{boundary condition b}
\begin{split}
0 &=  \phi^\prime(0)\\
&= \int_a^b \Big( \partial_2
L\{\tilde{y}\}(t) - D_{q,\omega} [\partial_3 L]\{\tilde{y}\}(t)\Big) \cdot h(qt+\omega)d_{q,\omega}t\\
& \quad + \displaystyle \Big( \int_a^b \partial_5 L\{\tilde{y}\}(t) d_{q,\omega}t + \partial_3 L\{\tilde{y}\}(b)\Big)\cdot h(b)=0.
\end{split}
\end{equation}
Using the Euler--Lagrange equation (\ref{E-L equation})
into (\ref{boundary condition b}),
and from the arbitrariness of $h$, it follows that
$$\partial_3 L\{\tilde{y}\}(b)= -\int_a^b \partial_{5}L\{\tilde{y}\}(t)d_{q,\omega}t.$$
\end{enumerate}
\end{proof}

In the case where $L$ does not depend on $y(a)$ and $y(b)$, under
appropriate assumptions on the Lagrangian $L$ (\textrm{cf.}
\cite{MalinowskaTorres}), we obtain the following result.

\begin{corollary}\label{corollary}
If
 $\tilde{y}$ is  a local minimizer
or local maximizer to problem
\begin{equation*}
 \mathcal{L}\left[  y\right] =  \int_{a}^{b}L\left(
t,y\left(  qt+\omega\right)  ,D_{q,\omega}\left[ y\right] \left(
t\right)
\right)  d_{q,\omega}t \longrightarrow \textrm{extr}\\
\end{equation*}
then $\tilde{y}$ satisfies the Euler--Lagrange equation
\begin{equation*}
\partial_2
L\left( t,y\left(  qt+\omega\right)  ,D_{q,\omega}\left[ y\right]
\left( t\right)\right) - D_{q,\omega} [\partial_3 L]\left( t,y\left(
qt+\omega\right)  ,D_{q,\omega}\left[ y\right] \left(
t\right)\right)=0
\end{equation*}
for all $t\in\left[a,b\right]_{q,\omega}$,
 and
\begin{enumerate}

\item  if  $y(a)$ is free,  then the natural boundary condition
\begin{equation}
\label{ha}
\partial_3 L\left(
t,\widetilde{y}\left(  qa+\omega\right)  ,D_{q,\omega}\left[ \widetilde{y}\right] \left(
a\right)\right)= 0
\end{equation}
holds;
\item if $y(b)$ is free, then the natural boundary condition
\begin{equation}
\label{hb}
\partial_3 L\left(
t,\widetilde{y}\left(  bt+\omega\right)  ,D_{q,\omega}\left[ \widetilde{y}\right] \left(
b\right) \right)= 0
\end{equation}
holds.
\end{enumerate}
\end{corollary}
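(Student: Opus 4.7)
The plan is to deduce the corollary as an immediate specialization of Theorem~\ref{Theorem natural boundary conditions}. The three-argument Lagrangian $L(t, y(qt+\omega), D_{q,\omega}[y](t))$ of the basic problem can be viewed as a five-argument Lagrangian $\tilde{L}(t, u_0, u_1, u_2, u_3) := L(t, u_0, u_1)$ that is constant in its last two variables. First I would check that hypotheses (H1)--(H3) and conditions (i)--(iii) of Lemma~\ref{asump} for $\tilde{L}$ are automatically implied by the analogous hypotheses imposed on $L$ in \cite{MalinowskaTorres}: indeed, the only additional requirements concern $\partial_4 \tilde{L}$ and $\partial_5 \tilde{L}$, but these are identically zero and therefore trivially continuous at $\omega_0$, bounded on $[a,b]$, and $q,\omega$-differentiable.

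Second, I would apply the first conclusion of Theorem~\ref{Theorem natural boundary conditions} to $\tilde{L}$ and use the identifications $\partial_2 \tilde{L} = \partial_2 L$ and $\partial_3 \tilde{L} = \partial_3 L$ to recover the stated Euler--Lagrange equation on $[a,b]_{q,\omega}$. This step is purely notational once the previous theorem is in hand.

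Finally, for the natural boundary conditions, I would invoke the two cases of Theorem~\ref{Theorem natural boundary conditions} with $\tilde{L}$ in place of $L$. When $y(a)$ is free, equation (\ref{a}) becomes $\partial_3 \tilde{L}\{\tilde{y}\}(a) = \int_a^b \partial_4 \tilde{L}\{\tilde{y}\}(t)\, d_{q,\omega}t$; when $y(b)$ is free, equation (\ref{b}) becomes $\partial_3 \tilde{L}\{\tilde{y}\}(b) = -\int_a^b \partial_5 \tilde{L}\{\tilde{y}\}(t)\, d_{q,\omega}t$. Because $\partial_4 \tilde{L} \equiv \partial_5 \tilde{L} \equiv 0$, both right-hand sides vanish by item~1 and item~2 of Theorem~\ref{Propriedades do integral}, yielding precisely (\ref{ha}) and (\ref{hb}).

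There is no substantive obstacle in the argument; the only point requiring care is to confirm that the hypotheses of \cite{MalinowskaTorres} on a three-argument Lagrangian line up with the five-argument hypotheses (H1)--(H3) and (i)--(iii) for $\tilde{L}$. This match is transparent because the two extra partial derivatives vanish identically and hence impose no further regularity or integrability constraints.
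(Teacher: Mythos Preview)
Your proposal is correct and matches the paper's approach: the corollary is stated immediately after Theorem~\ref{Theorem natural boundary conditions} with only the remark that it follows ``in the case where $L$ does not depend on $y(a)$ and $y(b)$,'' i.e., exactly the specialization $\partial_4\tilde{L}\equiv\partial_5\tilde{L}\equiv 0$ you describe. No separate proof is given in the paper, so your explicit verification of the hypotheses and the vanishing of the right-hand sides of \eqref{a}--\eqref{b} is precisely the intended argument.
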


\begin{remark}
Under appropriate conditions, when $(\omega, q)\rightarrow (0, 1)$
equations (\ref{ha}) and (\ref{hb}) reduce to the well-known natural
boundary conditions for the basic problem of the calculus of
variations
$$
\partial_{3}L(a,\tilde{y}(a),\tilde{y}^{\prime}(a))=0
\quad \mbox{ and } \quad \partial_{3}L(b,\tilde{y}(b),\tilde{y}^{\prime}(b))=0,
$$
respectively.
\end{remark}

\subsection{Isoperimetric problem}
\label{isoperimetric problem}

We now study quantum isoperimetric problems. Both normal
and abnormal extremizers are considered. One of the earliest problem involving such a constraint is that of finding
the geometric figure with the largest area that can be enclosed by a curve of some specified length.  Isoperimetric problems
have found a broad class of important applications throughout the
centuries. Areas of application include also economy (see, \textrm{e.g.}, \cite{Almeida,Caputo} and the references given there). In the context of the quantum calculus we mention, \textrm{e.g.}, \cite{Almeida2}. The isoperimetric problem consists
of minimizing or maximizing the functional
\begin{equation}
\label{isoperimetric1}
\mathcal{L}\left[  y\right] =  \int_{a}^{b}L\left(
t,y\left(  qt+\omega\right)  ,D_{q,\omega}\left[ y\right]
\left(  t\right)  ,y(a), y(b)
\right)  d_{q,\omega}t
\end{equation}
in the class of functions $y \in \mathcal{Y}^1$ satisfying the
integral constraint
\begin{equation}
\label{integral-constraint}
\mathcal{J}\left[  y\right] =  \int_{a}^{b}F\left(
t,y\left(  qt+\omega\right)  ,D_{q,\omega}\left[ y\right]
\left(  t\right)  ,y(a), y(b)
\right)  d_{q,\omega}t =\gamma
\end{equation}
for some  $\gamma \in \mathbb{R}$.

\begin{definition}
We say that  $\tilde{y} \in \mathcal{Y}^1$ is  a local minimizer
(resp. local maximizer) for the isoperimetric problem
\eqref{isoperimetric1}--\eqref{integral-constraint} if there exists $\delta >0$ such that
$\mathcal{L}[\tilde{y}]\leq \mathcal{L}[y]$ (resp.
$\mathcal{L}[\tilde{y}] \geq \mathcal{L}[y]$) for all $y \in \mathcal{Y}^1$
satisfying  the isoperimetric
constraint \eqref{integral-constraint} and $\left\Vert \widetilde{y}-y\right\Vert_{1,\infty}<\delta$.
\end{definition}

\begin{definition}
We say that $y \in \mathcal{Y}^1$ is an extremal
to $\mathcal{J}$ if $y$ satisfies the Euler--Lagrange equation
(\ref{E-L equation}) relatively to $\mathcal{J}$.
An extremizer (\textrm{i.e.}, a local minimizer or a local maximizer) to problem
\eqref{isoperimetric1}--\eqref{integral-constraint}
that is not an extremal to $\mathcal{J}$ is said to be a normal extremizer;
otherwise, the extremizer is said to be abnormal.
\end{definition}

\begin{theorem}(Necessary optimality condition for normal extremizers
to \eqref{isoperimetric1}--\eqref{integral-constraint})
\label{normalcase}
Suppose that $L$ and $F$ satisfy hypotheses (H1)--(H3) and conditions (i)--(iii)
of Lemma~\ref{asump}, and
suppose that $\widetilde{y } \in \mathcal{Y}^1$
gives a local minimum or a local maximum
to the functional $\mathcal{L}$
subject to the integral constraint (\ref{integral-constraint}). If $\widetilde{y}$
is not an extremal to $\mathcal{J}$, then there exists
a real $\lambda$ such that $\widetilde{y}$ satisfies the equation
\begin{equation}
\label{E-L equation for H}
\partial_2
H\{y\}(t) - D_{q,\omega} [\partial_3 H]\{y\}(t)=0
\end{equation}
for all $t \in [a,b]_{q,\omega}$, where $H = L-\lambda F$  and
\begin{enumerate}
\item  if  $y(a)$ is free,  then the natural boundary condition
\begin{equation}
\partial_3 H\{\tilde{y}\}(a)= \int_a^b \partial_{4}H\{\tilde{y}\}(t)d_{q,\omega}t
\end{equation}
holds;
\item if $y(b)$ is free, then the natural boundary condition
\begin{equation}
\partial_3 H\{\tilde{y}\}(b)= -\int_a^b \partial_{5}H\{\tilde{y}\}(t)d_{q,\omega}t
\end{equation}
holds.
\end{enumerate}
\end{theorem}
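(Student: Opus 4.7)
The plan is to adapt the standard Lagrange multiplier technique for isoperimetric problems to the Hahn quantum setting, leaning heavily on the machinery already developed for Theorem~\ref{E-L} and Theorem~\ref{Theorem natural boundary conditions}. First, I would consider two-parameter variations $y = \widetilde{y} + \varepsilon_1 h_1 + \varepsilon_2 h_2$, where $h_1, h_2 \in \mathcal{Y}^1$ are (initially) admissible variations vanishing at both endpoints, and define
\begin{equation*}
\widehat{L}(\varepsilon_1, \varepsilon_2) := \mathcal{L}[\widetilde{y} + \varepsilon_1 h_1 + \varepsilon_2 h_2], \qquad \widehat{J}(\varepsilon_1, \varepsilon_2) := \mathcal{J}[\widetilde{y} + \varepsilon_1 h_1 + \varepsilon_2 h_2] - \gamma.
\end{equation*}

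Second, using Lemma~\ref{asump} applied to $F$ together with the $q,\omega$-integration by parts formula and $h_2(a)=h_2(b)=0$, one computes
\begin{equation*}
\frac{\partial \widehat{J}}{\partial \varepsilon_2}\bigg|_{(0,0)} = \int_a^b \Big(\partial_2 F\{\widetilde{y}\}(t) - D_{q,\omega}[\partial_3 F]\{\widetilde{y}\}(t)\Big) \cdot h_2(qt+\omega)\, d_{q,\omega}t.
\end{equation*}
Because $\widetilde{y}$ is \emph{not} an extremal to $\mathcal{J}$, the contrapositive of Lemma~\ref{lemma:DR} guarantees the existence of some admissible $h_2$ making this quantity nonzero. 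By the implicit function theorem, the equation $\widehat{J}(\varepsilon_1,\varepsilon_2)=0$ can then be solved for $\varepsilon_2$ as a $C^1$ function of $\varepsilon_1$ in a neighborhood of the origin, with $\varepsilon_2(0)=0$. Consequently, $(0,0)$ is a constrained extremum of $\widehat{L}$ subject to $\widehat{J}=0$, and the classical Lagrange multiplier rule produces a real $\lambda$ such that $\nabla \widehat{L}(0,0) = \lambda\, \nabla \widehat{J}(0,0)$. Setting $H := L - \lambda F$, this becomes
\begin{equation*}
\int_a^b \Big(\partial_2 H\{\widetilde{y}\}(t)\cdot h_1(qt+\omega) + \partial_3 H\{\widetilde{y}\}(t)\cdot D_{q,\omega}[h_1](t) + \partial_4 H\{\widetilde{y}\}(t)\cdot h_1(a) + \partial_5 H\{\widetilde{y}\}(t)\cdot h_1(b)\Big)\, d_{q,\omega}t = 0.
\end{equation*}

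Third, the remainder of the argument recycles the proofs of Theorem~\ref{E-L} and Theorem~\ref{Theorem natural boundary conditions}, verbatim except that $L$ is replaced by $H$. Restricting to $h_1$ with $h_1(a)=h_1(b)=0$ and applying the $q,\omega$-integration by parts formula followed by Lemma~\ref{lemma:DR} yields the Euler--Lagrange equation \eqref{E-L equation for H}. Then, if $y(a)$ is free, one enlarges the class of $h_1$ to allow $h_1(a)\neq 0$ (keeping $h_1(b)=0$); substituting \eqref{E-L equation for H} into the integral identity above kills the bulk term, and the arbitrariness of $h_1(a)$ forces $\partial_3 H\{\widetilde{y}\}(a) = \int_a^b \partial_4 H\{\widetilde{y}\}(t)\, d_{q,\omega}t$. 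The case where $y(b)$ is free is symmetric, producing the second natural boundary condition.

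The main obstacle is the justification at the Lagrange multiplier step: one must check that the two-parameter map $(\varepsilon_1,\varepsilon_2)\mapsto(\widehat{L},\widehat{J})$ is genuinely $C^1$ near the origin, which requires that Lemma~\ref{asump} (originally formulated for one-parameter variations) applies uniformly in a two-parameter family, and that the linear combination $H = L - \lambda F$ inherits hypotheses (H1)--(H3) and conditions (i)--(iii) from $L$ and $F$. Both points are essentially routine, since the relevant conditions are preserved under real-linear combinations and the derivative formulas from Lemma~\ref{derivada do integral} extend to several parameters by treating one variable at a time; nonetheless, these verifications are the delicate part of an otherwise schematic proof.
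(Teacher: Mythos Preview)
Your proposal is correct and follows essentially the same route as the paper's own proof: two-parameter variations, the computation of $\partial\widehat{J}/\partial\varepsilon_2$ via integration by parts, the choice of $h_2$ (with $h_2(a)=h_2(b)=0$) using the non-extremality of $\widetilde{y}$ for $\mathcal{J}$, the implicit function theorem, the Lagrange multiplier rule, and then the recycling of the arguments of Theorems~\ref{E-L} and~\ref{Theorem natural boundary conditions} with $H=L-\lambda F$ in place of $L$. Your closing remarks on the $C^1$ verification and inheritance of (H1)--(H3) are points the paper simply takes for granted.
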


\begin{proof} Suppose that $\widetilde{y} \in \mathcal{Y}^1$ is a normal
extremizer to problem \eqref{isoperimetric1}--\eqref{integral-constraint}.
Define the real functions $\phi, \psi:\mathbb{R}^2 \rightarrow \mathbb{R}$ by
\begin{gather*}
\phi(\epsilon_1, \epsilon_2)
= \mathcal{L}[\widetilde{y} + \epsilon_1 h_1 + \epsilon_2 h_2],\\
\psi(\epsilon_1, \epsilon_2)
= \mathcal{J}[\widetilde{y} + \epsilon_1 h_1 + \epsilon_2 h_2]  - \gamma,
\end{gather*}
where $h_2 \in \mathcal{Y}^1$ is  fixed  (that we will choose later) and $h_1 \in \mathcal{Y}^1$
is an arbitrary fixed function.

Note that
\begin{equation*}
\begin{split}
\frac{\partial \psi}{\partial \epsilon_2}(0,0)&= \displaystyle\int_a^b \Big( \partial_{2}F\{\widetilde{y}\}(t)\cdot h_2(qt+\omega)
+ \partial_{3}F\{\widetilde{y}\}(t) \cdot D_{q,\omega}[h_2](t) + \partial_{4}F\{\widetilde{y}\}(t)\cdot h_2(a)\\
& +
\partial_{5}F\{\widetilde{y}\}(t)\cdot h_2(b)\Big ) d_{q,\omega}t.
\end{split}
\end{equation*}
Using integration by parts formula we get

\begin{multline*}
\frac{\partial \psi}{\partial \epsilon_2}(0,0)
= \displaystyle \int_a^b \Big( \partial_{2}F\{\widetilde{y}\}(t)
- D_{q,\omega} [\partial_{3}F]\{\widetilde{y}\}(t)\Big)\cdot h_2(qt+\omega) d_{q,\omega}t\\
+\displaystyle \int_a^b \Big( \partial_{4}F\{\widetilde{y}\}(t)\cdot h_2(a) + \partial_{5}F\{\widetilde{y}\}(t)\cdot h_2(b)\Big)d_{q,\omega}t
+\Big[\partial_{3}F\{\widetilde{y}\}(t)\cdot h_2(t)\Big]_a^b.
\end{multline*}

Restricting $h_2$ to those such that $h_2(a)=h_2(b)=0$ we obtain
$$
\frac{\partial \psi}{\partial \epsilon_2}(0,0)
= \displaystyle \int_a^b \Big( \partial_{2}F\{\widetilde{y}\}(t)
- D_{q,\omega} [\partial_{3}F]\{\widetilde{y}\}(t)\Big)\cdot h_2(qt+\omega) d_{q,\omega}t.\\
$$

Since  $\widetilde{y}$ is not an extremal to $\mathcal{J}$,
then we can choose $h_2$ such that
$\displaystyle \frac{\partial \psi}{\partial \epsilon_2}(0,0) \neq 0$.
We keep $h_2$ fixed. Since $\psi(0,0)=0$, by the Implicit Function Theorem
there exists a function $g$ defined in a neighborhood $V$ of zero,
such that $g(0)=0$ and $\psi(\epsilon_1,g(\epsilon_1))=0$,
for any $\epsilon_1 \in V$, that is, there exists a subset of variation curves
$y= \widetilde{y} + \epsilon_1 h_1 + g(\epsilon_1)h_2$
satisfying the isoperimetric constraint.
Note that  $(0,0)$ is an extremizer of $\phi$ subject to the constraint $\psi=0$ and
$$
\nabla \psi (0,0) \neq (0,0).
$$

By the Lagrange multiplier rule,
there exists some constant $\lambda \in \mathbb{R}$ such that
\begin{equation}\label{gradient} \nabla \phi (0,0) = \lambda \nabla \psi (0,0).
\end{equation}

Restricting  $h_1$ to those such that $h_1(a)=h_1(b)=0$ we get
$$\frac{\partial \phi}{\partial \epsilon_1}(0,0)
= \displaystyle \int_a^b \Big( \partial_{2}L\{\widetilde{y}\}(t)
- D_{q,\omega} [\partial_{3}L]\{\widetilde{y}\}(t)\Big)\cdot h_1(qt+\omega) d_{q,\omega}t$$

and
$$\frac{\partial \psi}{\partial \epsilon_1}(0,0)
= \displaystyle \int_a^b \Big( \partial_{2}F\{\widetilde{y}\}(t)
- D_{q,\omega} [\partial_{3}F]\{\widetilde{y}\}(t)\Big)\cdot h_1(qt+\omega) d_{q,\omega}t.$$

Using (\ref{gradient}) it follows that
$$
\displaystyle \int_a^b \Bigl( \partial_{2}L\{\widetilde{y}\}(t)
- D_{q,\omega} [\partial_{3}L]\{\widetilde{y}\}(t)
 - \lambda\Big(\partial_{2}F\{\widetilde{y}\}(t)
- D_{q,\omega} [\partial_{3}F]\{\widetilde{y}\}(t)\Big)\Bigr)\cdot h_1(qt+\omega) d_{q,\omega}t=0.
$$

Using the Fundamental Lemma of the Hahn quantum variational calculus
(Lemma~\ref{lemma:DR}), and recalling that $h_1$
is arbitrary, we conclude that
$$
\partial_{2}L\{\widetilde{y}\}(t)
- D_{q,\omega} [\partial_{3}L]\{\widetilde{y}\}(t)
 - \lambda\Big(\partial_{2}F\{\widetilde{y}\}(t)
- D_{q,\omega} [\partial_{3}F]\{\widetilde{y}\}(t)\Big)=0
$$

for all $t \in [a,b]_{q,\omega}$, proving that $H= L-\lambda F$
satisfies the Euler--Lagrange condition (\ref{E-L equation for H}).

\begin{enumerate}
\item Suppose now that $y(a)$ is free.
If $y(b)=\beta$ is given, then $h_1(b)=0$;
if $y(b)$ is free, then we restrict ourselves
to those $h_1$ for which $h_1(b)=0$.
Therefore,
\begin{equation}
\begin{split}
&\frac{\partial \phi}{\partial \epsilon_1}(0,0)
= \displaystyle \int_a^b \Big( \partial_{2}L\{\widetilde{y}\}(t)
- D_{q,\omega} [\partial_{3}L]\{\widetilde{y}\}(t)\Big)\cdot h_1(qt+\omega) d_{q,\omega}t\\
& \quad + \displaystyle \Big( \int_a^b \partial_4 L\{\tilde{y}\}(t) d_{q,\omega}t - \partial_3 L\{\tilde{y}\}(a)\Big)\cdot h_1(a)
\end{split}
\end{equation}
and
\begin{equation}
\begin{split}
&\frac{\partial \psi}{\partial \epsilon_1}(0,0)
= \displaystyle \int_a^b \Big( \partial_{2}F\{\widetilde{y}\}(t)
- D_{q,\omega} [\partial_{3}F]\{\widetilde{y}\}(t)\Big)\cdot h_1(qt+\omega) d_{q,\omega}t\\
& \quad + \displaystyle \Big( \int_a^b \partial_4 F\{\tilde{y}\}(t) d_{q,\omega}t - \partial_3 F\{\tilde{y}\}(a)\Big)\cdot h_1(a).
\end{split}
\end{equation}

Using (\ref{gradient}) and
the Euler--Lagrange equation (\ref{E-L equation for H})
we obtain
$$
\displaystyle \Big( \int_a^b \partial_4 L\{\tilde{y}\}(t) d_{q,\omega}t - \partial_3 L\{\tilde{y}\}(a)\Big)\cdot h_1(a)=
\lambda \Big( \int_a^b \partial_4 F\{\tilde{y}\}(t) d_{q,\omega}t - \partial_3 F\{\tilde{y}\}(a)\Big)\cdot h_1(a).
$$
Hence
$$
\displaystyle \Big (\int_a^b \partial_4 H\{\tilde{y}\}(t) d_{q,\omega}t- \partial_3 H\{\tilde{y}\}(a)\Big)\cdot h_1(a)=0
$$
and from the arbitrariness of $h_1$ we conclude that
$$
\partial_3 H\{\tilde{y}\}(a) =\int_a^b \partial_4 H\{\tilde{y}\}(t) d_{q,\omega}t.
$$

\item Suppose now that $y(b)$ is free.
If $y(a)=\alpha$, then $h_1(a)=0$; if $y(a)$ is
 free, then we restrict ourselves to those $h_1$ for which $h_1(a)=0$.
 Using similar arguments as the ones used in (1), we obtain that
 $$\partial_3 H\{\tilde{y}\}(b)= -\int_a^b \partial_{5}H\{\tilde{y}\}(t)d_{q,\omega}t.$$
\end{enumerate}

\end{proof}

Introducing an extra multiplier $\lambda_{0}$ we can also deal with abnormal extremizers to
the isoperimetric problem \eqref{isoperimetric1}--\eqref{integral-constraint}.

\begin{theorem}(Necessary optimality condition for normal and abnormal extremizers
to \eqref{isoperimetric1}--\eqref{integral-constraint})
\label{thm:abn}
Suppose that $L$ and $F$ satisfy hypotheses (H1)--(H3) and conditions (i)--(iii)
of Lemma~\ref{asump}, and
suppose that $\widetilde{y} \in \mathcal{Y}^1$ gives a local minimum or a local maximum
to the functional $\mathcal{L}$ subject to
the integral constraint (\ref{integral-constraint}). Then there exist two constants $\lambda_0$
and  $\lambda$, not both zero, such that $\widetilde{y}$ satisfies the equation
\begin{equation}
\label{E-L equation for H normal and abnormal}
\partial_2
H\{y\}(t) - D_{q,\omega} [\partial_3 H]\{y\}(t)=0
\end{equation}
for all $t \in [a,b]_{q,\omega}$, where $H = \lambda_0 L-\lambda F$ and
\begin{enumerate}
\item  if  $y(a)$ is free,  then the natural boundary condition
\begin{equation}
\partial_3 H\{\tilde{y}\}(a)= \int_a^b \partial_{4}H\{\tilde{y}\}(t)d_{q,\omega}t
\end{equation}
holds;
\item if $y(b)$ is free, then the natural boundary condition
\begin{equation}
\partial_3 H\{\tilde{y}\}(b)= -\int_a^b \partial_{5}H\{\tilde{y}\}(t)d_{q,\omega}t
\end{equation}
holds.
\end{enumerate}
\end{theorem}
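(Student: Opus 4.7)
The plan is to split into two cases according to whether $\widetilde{y}$ is an extremal to $\mathcal{J}$ or not, mirroring the standard dichotomy of the abnormal Lagrange multiplier rule. The abnormal case is then handled by ``turning off'' the cost functional via $\lambda_{0}=0$ and retaining only the constraint.

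If $\widetilde{y}$ is not an extremal to $\mathcal{J}$, Theorem~\ref{normalcase} applies directly and produces a real number $\lambda$ such that $H = L - \lambda F$ satisfies the Euler--Lagrange equation \eqref{E-L equation for H normal and abnormal} and the applicable natural boundary conditions. Setting $\lambda_{0} := 1$, the pair $(\lambda_{0},\lambda) = (1,\lambda) \ne (0,0)$ discharges the conclusion of Theorem~\ref{thm:abn} in this case.

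If $\widetilde{y}$ is an extremal to $\mathcal{J}$, then by definition
$$\partial_{2} F\{\widetilde{y}\}(t) - D_{q,\omega}[\partial_{3} F]\{\widetilde{y}\}(t) = 0$$
for every $t \in [a,b]_{q,\omega}$. I would then choose $\lambda_{0} := 0$ and $\lambda := 1$, so that $H = -F$. With this choice, the Euler--Lagrange equation \eqref{E-L equation for H normal and abnormal} for $H$ is merely the negation of the Euler--Lagrange equation for $F$ and so is automatically satisfied; moreover $(\lambda_{0},\lambda) = (0,1)$ is nonzero, as required.

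The \emph{main obstacle} is establishing the natural boundary conditions in the abnormal case, since with $H = -F$ they reduce to conditions on $F$ alone and these do not follow directly from the definition of extremal. I would resolve this by refining the dichotomy within the abnormal case: if some applicable free-endpoint condition for $F$ fails, then one can choose the auxiliary variation $h_{2}$ in the proof of Theorem~\ref{normalcase} with $h_{2}(a)\ne 0$ or $h_{2}(b)\ne 0$ (rather than forcing $h_{2}(a)=h_{2}(b)=0$), which—using the Euler--Lagrange equation for $F$ together with the integration-by-parts identity—makes $\partial\psi/\partial\epsilon_{2}(0,0)\ne 0$ and allows the implicit function theorem to apply with $\lambda_{0}=1$; only in the remaining fully degenerate subcase, where both the Euler--Lagrange equation and all applicable natural boundary conditions for $F$ already hold, is $\lambda_{0}=0$ genuinely needed, and there the natural boundary conditions for $H=-F$ coincide with those for $F$ and hold by hypothesis. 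The technical heart of the proof is therefore this interplay between the abnormal multiplier and the free-endpoint structure, carried out by an auxiliary application of Lemma~\ref{lemma:DR} to the boundary terms.
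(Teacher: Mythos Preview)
Your approach is correct but takes a different route from the paper. The paper's proof is essentially one line: it invokes the \emph{abnormal} (Fritz John) Lagrange multiplier rule directly on the two-dimensional auxiliary problem of extremizing $\phi$ subject to $\psi=0$, obtaining multipliers $(\lambda_{0},\lambda)\neq(0,0)$ with $\lambda_{0}\nabla\phi(0,0)=\lambda\nabla\psi(0,0)$ as a black box (citing \cite{vanBrunt}) and then proceeds exactly as in the proof of Theorem~\ref{normalcase}. No case distinction is made; the multiplier rule absorbs both the normal and abnormal situations uniformly. Your argument instead splits into the normal/abnormal dichotomy and, within the abnormal case, further refines according to whether the applicable natural boundary conditions for $F$ already hold---choosing $h_{2}$ with a nonzero free-endpoint value to recover constraint qualification when they do not, and reserving $\lambda_{0}=0$ for the fully degenerate subcase. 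What you gain is a self-contained argument that does not rely on an external abnormal multiplier theorem; what the paper gains is brevity and a uniform treatment with no branching. One small correction: the boundary-term step at the end does not need Lemma~\ref{lemma:DR} (which concerns the vanishing of an integral against all test functions $h(qt+\omega)$); it follows simply from the arbitrariness of the real number $h_{1}(a)$ or $h_{1}(b)$ at a free endpoint.
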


\begin{proof}  The proof is similar to the proof of Theorem \ref{normalcase}.
Since $(0,0)$ is an extremizer of $\phi$ subject
to the constraint $\psi=0$, the
abnormal Lagrange multiplier rule
(\textrm{cf.}, \textrm{e.g.}, \cite{vanBrunt})
guarantees the existence of two reals
$\lambda_0$ and $\lambda$, not both zero, such that
$$
\lambda_0 \nabla \phi = \lambda \nabla \psi.
$$
\end{proof}

\begin{remark}
Note that if $\widetilde{y}$ is a normal extremizer then,
by Theorem~\ref{normalcase}, one can choose
$\lambda_0 = 1$ in Theorem~\ref{thm:abn}.
The condition $(\lambda_0,\lambda) \ne (0,0)$ guarantees
that Theorem~\ref{thm:abn} is a useful necessary condition.
\end{remark}

In the case where $L$ and $F$ do not depend on $y(a)$ and $y(b)$, under
appropriate assumptions on Lagrangians $L$ and $F$, we obtain the following result.

\begin{corollary}\label{corollary2}
If
 $\tilde{y}$ is  a local minimizer
or local maximizer to the problem
\begin{equation*}
 \mathcal{L}\left[  y\right] =  \int_{a}^{b}L\left(
t,y\left(  qt+\omega\right)  ,D_{q,\omega}\left[ y\right] \left(
t\right)
\right)  d_{q,\omega}t \longrightarrow \textrm{extr}\\
\end{equation*}
subject to the integral constraint
$$
\mathcal{J}\left[  y\right] =  \int_{a}^{b}F\left(
t,y\left(  qt+\omega\right)  ,D_{q,\omega}\left[ y\right]
\left(  t\right)
\right)  d_{q,\omega}t =\gamma
$$
for some  $\gamma \in \mathbb{R}$,
then there exist two constants $\lambda_0$
and  $\lambda$, not both zero, such that $\widetilde{y}$ satisfies the following equation
$$
\partial_2
H(t,y(qt+\omega),D_{q,\omega}[y](t)) - D_{q,\omega} [\partial_3 H](t,y(qt+\omega),D_{q,\omega}[y](t))=0
$$
for all $t \in [a,b]_{q,\omega}$, where $H = \lambda_0 L-\lambda F$ and
\begin{enumerate}
\item if  $y(a)$ is free,  then the natural boundary condition
$$
\partial_3 H(t,\widetilde{y}(q a+\omega),D_{q,\omega}[\widetilde{y}](a))= 0
$$
holds;
\item if $y(b)$ is free, then the natural boundary condition
$$
\partial_3 H(t,\widetilde{y}(q b+\omega),D_{q,\omega}[\widetilde{y}](b))= 0
$$
holds.
\end{enumerate}
\end{corollary}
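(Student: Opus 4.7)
The plan is to derive Corollary \ref{corollary2} as a direct specialization of Theorem \ref{thm:abn}. The key observation is that when the Lagrangians $L$ and $F$ have no dependence on the arguments $u_3$ and $u_4$ corresponding to $y(a)$ and $y(b)$, the partial derivatives $\partial_4 L$, $\partial_5 L$, $\partial_4 F$, $\partial_5 F$ vanish identically, and hence so do $\partial_4 H$ and $\partial_5 H$ where $H = \lambda_0 L - \lambda F$.

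First I would embed the problem into the framework of Theorem \ref{thm:abn} by formally regarding $L$ and $F$ as functions on $I \times \mathbb{R}^4$ that happen to be constant in the last two slots. Under the standing assumptions placed on $L$ and $F$ (which imply (H1)--(H3) and conditions (i)--(iii) of Lemma \ref{asump} for the embedded Lagrangians), Theorem \ref{thm:abn} applies and supplies constants $\lambda_0, \lambda$, not both zero, for which the Euler--Lagrange equation (\ref{E-L equation for H normal and abnormal}) holds with $H = \lambda_0 L - \lambda F$. Since $\partial_4 H \equiv 0 \equiv \partial_5 H$, this equation reduces to the displayed Euler--Lagrange equation for $H$ in the variables $(t, y(qt+\omega), D_{q,\omega}[y](t))$.

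Next, I would specialize the two natural boundary conditions of Theorem \ref{thm:abn}. For item (1), if $y(a)$ is free, Theorem \ref{thm:abn} gives
\begin{equation*}
\partial_3 H\{\tilde{y}\}(a) = \int_a^b \partial_4 H\{\tilde{y}\}(t)\, d_{q,\omega}t,
\end{equation*}
and since $\partial_4 H \equiv 0$, property 2 of Theorem \ref{Propriedades do integral} yields $\partial_3 H\{\tilde{y}\}(a) = 0$. The same argument applied to the $y(b)$-free case, using $\partial_5 H \equiv 0$, yields $\partial_3 H\{\tilde{y}\}(b) = 0$.

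No real obstacle is expected: the only point requiring mild care is verifying that the hypotheses tacitly phrased on the three-argument Lagrangians $L$ and $F$ of the corollary translate into the full set of hypotheses of Theorem \ref{thm:abn} for the trivially extended five-argument Lagrangians, which is immediate because constancy in the last two variables preserves $C^1$ regularity and leaves continuity and boundedness properties unchanged.
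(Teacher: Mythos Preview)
Your proposal is correct and matches the paper's approach: the corollary is stated immediately after Theorem~\ref{thm:abn} as the specialization to Lagrangians independent of $y(a)$ and $y(b)$, with no separate proof given. Your observation that $\partial_4 H\equiv 0\equiv\partial_5 H$ collapses the generalized natural boundary conditions to the simpler ones is exactly the intended reduction.
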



\subsection{Sufficient condition for optimality}
\label{sufficient_condition}

In this subsection we prove a sufficient optimality condition for problem (\ref{P}). Similar to the
classical calculus of variations we assume the lagrangian function to be convex (or concave).

\begin{definition}
Given a function $f:I\times \mathbb{R}^4\rightarrow \mathbb{R}$, we say that $f( t,u_1, \ldots, u_4)$
is jointly convex (resp. concave) in $(u_1, \ldots, u_4)$ if  $\partial_i
f$, $i=2,\ldots,5$, are continuous and verify the following
condition:
\begin{equation*}
f(t,u_1+\bar{u}_1,\ldots,u_4+\bar{u}_4)-f(t,u_1,\ldots,u_4) \geq
(resp. \leq) \sum_{i=2}^{5}\partial_i f(t,u_1,\ldots,u_4)\bar{u}_{i-1}
\end{equation*}
for all
$(t,u_1+\bar{u}_1,\ldots,u_4+\bar{u}_4)$,$(t,u_1,\ldots,u_4)\in
I\times\mathbb{R}^4$.
\end{definition}

\begin{theorem}
\label{suff} Let $L( t,u_1, \ldots, u_4)$ be jointly
convex (resp. concave) in $(u_1, \ldots, u_4)$. If $\tilde{y}$ satisfies
conditions (\ref{E-L equation}), (\ref{a}) and (\ref{b}), then
$\tilde{y}$ is a global minimizer (resp. maximizer) to problem (\ref{P}).
\end{theorem}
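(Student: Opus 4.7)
The plan is to prove the inequality $\mathcal{L}[y]-\mathcal{L}[\tilde y]\ge 0$ (resp.\ $\le 0$) for every $y\in\mathcal{Y}^1$ directly, using joint convexity (resp.\ concavity) of $L$ together with the Euler--Lagrange equation and the natural boundary conditions satisfied by $\tilde y$. Treat the convex case; the concave case is entirely analogous with reversed inequalities.

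Let $y\in\mathcal{Y}^1$ be arbitrary and set $h:=y-\tilde y\in\mathcal{Y}^1$. Apply the joint convexity inequality pointwise in $t\in[a,b]$ at the argument $\{\tilde y\}(t)=(t,\tilde y(qt+\omega),D_{q,\omega}[\tilde y](t),\tilde y(a),\tilde y(b))$ with the increment $(h(qt+\omega),D_{q,\omega}[h](t),h(a),h(b))$. Integrating the resulting inequality with respect to $d_{q,\omega}t$ over $[a,b]$ (using monotonicity of the $q,\omega$-integral as in Lemma~\ref{positividade}, which is applicable since the convexity inequality holds at every relevant quantum grid point) yields
\begin{equation*}
\begin{split}
\mathcal{L}[y]-\mathcal{L}[\tilde y] \ge \int_a^b \Big(&\partial_2 L\{\tilde y\}(t)\cdot h(qt+\omega) + \partial_3 L\{\tilde y\}(t)\cdot D_{q,\omega}[h](t)\\
&+\partial_4 L\{\tilde y\}(t)\cdot h(a) + \partial_5 L\{\tilde y\}(t)\cdot h(b)\Big)\,d_{q,\omega}t.
\end{split}
\end{equation*}

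Next I would apply the $q,\omega$-integration by parts formula (item~\ref{itm:ip} of Theorem~\ref{Propriedades do integral}) to the term containing $D_{q,\omega}[h]$, obtaining
\begin{equation*}
\int_a^b \partial_3 L\{\tilde y\}(t)\,D_{q,\omega}[h](t)\,d_{q,\omega}t = \Big[\partial_3 L\{\tilde y\}(t)\cdot h(t)\Big]_a^b - \int_a^b D_{q,\omega}[\partial_3 L]\{\tilde y\}(t)\cdot h(qt+\omega)\,d_{q,\omega}t.
\end{equation*}
Substituting this back and grouping the integrand allows one to factor $\partial_2 L\{\tilde y\}(t)-D_{q,\omega}[\partial_3 L]\{\tilde y\}(t)$, which vanishes by the Euler--Lagrange equation \eqref{E-L equation}. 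The remaining contribution reduces to
\begin{equation*}
\Big(\int_a^b\partial_4 L\{\tilde y\}(t)\,d_{q,\omega}t-\partial_3 L\{\tilde y\}(a)\Big)h(a) + \Big(\int_a^b\partial_5 L\{\tilde y\}(t)\,d_{q,\omega}t+\partial_3 L\{\tilde y\}(b)\Big)h(b),
\end{equation*}
and both parentheses vanish by the natural boundary conditions \eqref{a} and \eqref{b}. Hence $\mathcal{L}[y]-\mathcal{L}[\tilde y]\ge 0$, which is the claimed global minimality.

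The main conceptual point to be careful about is that here $y$ ranges over all of $\mathcal{Y}^1$ (no boundary constraints are imposed a priori), so the boundary terms $h(a)$ and $h(b)$ need not vanish; this is precisely why the natural boundary conditions are needed to close the argument, in contrast with the fixed-endpoint case where $h(a)=h(b)=0$ eliminates those contributions automatically. A minor technical point is the justification of monotonicity of the $q,\omega$-integral applied to the convexity inequality: Lemma~\ref{positividade} requires nonnegativity on the grid $\{q^nb+\omega[n]_q\}$ (and analogously for $a$), which is provided since the convexity inequality holds pointwise on $I$, in particular at every point of $[a,b]_{q,\omega}$; splitting the integral through $\omega_0$ as in the definition of $\int_a^b$ then delivers the required global inequality.
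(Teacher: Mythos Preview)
Your proof is correct and follows essentially the same approach as the paper's: apply joint convexity pointwise and integrate, then use integration by parts together with the Euler--Lagrange equation \eqref{E-L equation} and the natural boundary conditions \eqref{a}--\eqref{b} to see that the right-hand side vanishes. In fact you are somewhat more explicit than the paper, which simply writes ``proceeding analogously as in the proof of Theorem~\ref{E-L}'' for the second step and does not comment on the monotonicity issue for the $q,\omega$-integral that you address via Lemma~\ref{positividade}.
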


\begin{proof}
We give the proof for the convex case. Since $L$ is jointly convex
in $(u_1, \ldots, u_4)$, then  for any  $h \in \mathcal{Y}^1$,
\begin{equation*}
\begin{split}
\mathcal{L}&[\tilde{y}+h]-\mathcal{L}[\tilde{y}]=\int_a^b\left(L\{\tilde{y}+h\}(t)-L\{\tilde{y}\}(t)\right)d_{q,\omega}t\\
&\geq\int_a^b \Big( \partial_2 L\{\tilde{y}\}(t)\cdot h(qt+\omega) +
\partial_3 L\{\tilde{y}\}(t)\cdot D_{q,\omega}[h](t) + \partial_4
L\{\tilde{y}\}(t)\cdot h(a) \\
&+ \partial_5 L\{\tilde{y}\}(t)\cdot
h(b)\Big)d_{q,\omega}t.
\end{split}
\end{equation*}
Proceeding analogously as in the proof of Theorem~\ref{E-L} and since
$\tilde{y}$ satisfies conditions (\ref{E-L equation}), (\ref{a}) and
(\ref{b}), we obtain $\mathcal{L}(\tilde{y}+h) -
\mathcal{L}(\tilde{y}) \geq 0$, proving the desired result.
\end{proof}


\section{Illustrative examples and applications} \label{sec:Ex}

We provide some examples in order to illustrate our main results.

\begin{example}
\label{example1} Let $q\in ]0,1[$ and  $\omega\geq 0$ be fixed real numbers, and $I$ be
an interval of $\mathbb{R}$ such that $\omega_0,0,1\in I$.
Consider the problem
\begin{equation}
\label{ex1} \mathcal{L}[y] =\int_{0}^{1}\left(y(qt+\omega)
+\frac{1}{2} (D_{q,\omega}[y](t))^2\right)d_{q,\omega} t
\longrightarrow \textrm{min}
\end{equation}
over all $y \in \mathcal{Y}^1$ satisfying the boundary condition $y(1)=1$.
If $\widetilde{y}$ is a local
minimizer to problem \eqref{ex1}, then by Corollary~\ref{corollary} it satisfies the following conditions:
\begin{equation}
\label{EL:ex1} D_{q,\omega}[D_{q,\omega}[\widetilde{y}]](t)=1,
\end{equation}
for all $t\in\{\omega[n]_q:
n\in\mathbb{N}_{0}\}\cup\{q^n+\omega[n]_q: n\in\mathbb{N}_{0}\} \cup
\{\omega_0\}$  and
\begin{equation}
\label{a:ex1}
D_{q,\omega}[\widetilde{y}](0)=0.
\end{equation}
It is easy to verify  that
$\widetilde{y}(t)=\frac{1}{q+1}t^2-(\frac{\omega}{q+1}-c)t+d$, where $c$,
$d\in\mathbb{R}$, is a solution to equation \eqref{EL:ex1}.
Using the natural boundary condition
\eqref{a:ex1} we obtain that $c=0$.
In order
to determine $d$ we use the fixed boundary condition $y(1)=1$, and obtain that $d=\frac{q+\omega}{q+1}$.
Hence
\begin{equation*}
\widetilde{y}(t)=\frac{1}{q+1}t^2-\frac{\omega}{q+1}t
+\frac{q+\omega}{q+1}
\end{equation*}
is a candidate to be a minimizer to problem \eqref{ex1}. Moreover, since $L$ is jointly convex, by
Theorem~\ref{suff}, $\widetilde{y}$ is a global minimizer to problem \eqref{ex1}.
\end{example}

\begin{example}
\label{example2} Let $q\in ]0,1[$ and  $\omega\geq 0$ be fixed real numbers, and $I$ be
an interval of $\mathbb{R}$ such that $\omega_0,0,1\in I$.
Consider the problem
\begin{equation}
\label{AD} \mathcal{L}[y] =\int_{0}^{1}\left(y(qt+\omega)
+\frac{1}{2} (D_{q,\omega}[y](t))^2+\gamma\frac{1}{2}
(y(1)-1)^2+\nu\frac{1}{2} y^2(0)\right)d_{q,\omega} t
\longrightarrow \textrm{min}
\end{equation}
where $\gamma$, $\nu\in\mathbb{R}^+$. If $\widetilde{y}$ is a local
minimizer to problem \eqref{AD}, then by Theorem~\ref{Theorem
natural boundary conditions} it satisfies the following conditions:
\begin{equation}
\label{EL:ex} D_{q,\omega}[D_{q,\omega}[\widetilde{y}]](t)=1,
\end{equation}
for all $t\in\{\omega[n]_q:
n\in\mathbb{N}_{0}\}\cup\{q^n+\omega[n]_q: n\in\mathbb{N}_{0}\} \cup
\{\omega_0\}$, and
\begin{equation}
\label{a:ex}
D_{q,\omega}[\widetilde{y}](0)=\int_{0}^{1}\nu
\widetilde{y}(0)d_{q,\omega} t,
\end{equation}
\begin{equation}
\label{b:ex}D_{q,\omega}[\widetilde{y}](1)=-\int_{0}^{1}\gamma
(\widetilde{y}(1)-1)d_{q,\omega} t.
\end{equation}
As in Example~\ref{example1},
$\widetilde{y}(t)=\frac{1}{q+1}t^2-(\frac{\omega}{q+1}-c)t+d$, where $c$,
$d\in\mathbb{R}$, is a solution to equation \eqref{EL:ex}. In order
to determine $c$ and $d$ we use the natural boundary conditions
\eqref{a:ex} and \eqref{b:ex}. This gives
\begin{equation}\label{sol}
\widetilde{y}(t)=\frac{1}{q+1}t^2-\frac{\omega(\nu+\gamma)-\nu(\gamma-1)(q+1)+\gamma\nu}{(q+1)(\gamma+\nu\gamma+\nu)}t
+\frac{(\gamma-1)(q+1)-\gamma(1-\omega)}{(q+1)(\gamma+\nu\gamma+\nu)}
\end{equation}
as a candidate to be a minimizer to problem \eqref{AD}. Moreover, since $L$ is jointly convex, by
Theorem~\ref{suff} it is a global minimizer. The minimizer \eqref{sol} is represented in Figure~\ref{fig1} for fixed $\gamma=\nu=2$, $q=0.99$ and different values of $\omega$.

\begin{figure}
\begin{center}
\includegraphics[scale=0.40]{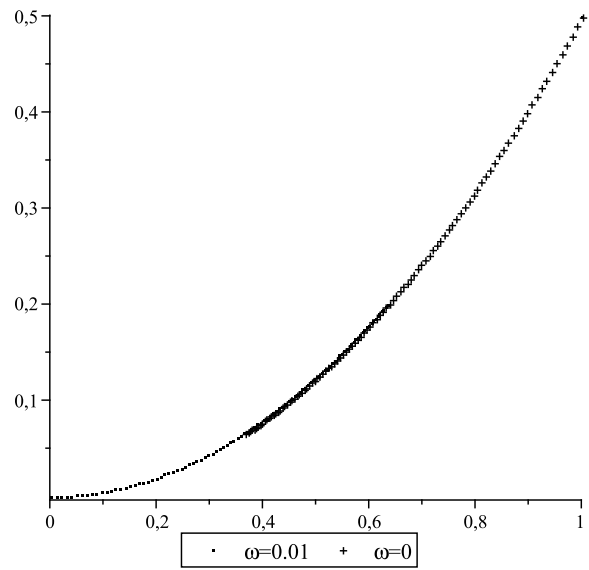}
\caption{The minimizer \eqref{sol} of Example~\ref{example2} for fixed $\gamma=\nu=2$, $q=0.99$ and different values of $\omega$.}
\label{fig1}
\end{center}
\end{figure}

We note that in the limit, when $\gamma,\nu \rightarrow +\infty$,
$\widetilde{y}(t)=\frac{1}{q+1}t^2+\frac{q}{q+1}t$ and coincides with the
solution of the following problem with fixed initial and terminal
points (\textrm{cf.} \cite{MalinowskaTorres}):
\begin{equation*}
\quad \mathcal{L}[y] =\int_{0}^{1}\left(y(qt+\omega) +\frac{1}{2}
(D_{q,\omega}[y](t))^2\right)d_{q,\omega} t \longrightarrow
\textrm{min}
\end{equation*}
subject to the boundary conditions
\begin{equation*}
\label{eq:bc} y(0)=0,  \quad y(1)=1.
\end{equation*}
Expression $\gamma\frac{1}{2} (y(1)-1)^2+\nu\frac{1}{2} y^2(0)$
added to the Lagrangian $y(qt+\omega) +\frac{1}{2}
(D_{q,\omega}[y](t))^2$ works like a penalty function when $\gamma$
and $\nu$ go to infinity. The penalty function itself grows, and
forces the merit function \eqref{AD} to increase in value when the
constraints $y(0) = 0$ and $y(1) = 1$ are violated, and causes no
growth when constraints are fulfilled.
The minimizer \eqref{sol} is represented in Figure~\ref{fig2} for fixed $q=0.5$, $\omega=1$ and different values of $\gamma$ and $\nu$.

\begin{figure}
\begin{center}
\includegraphics[scale=0.40]{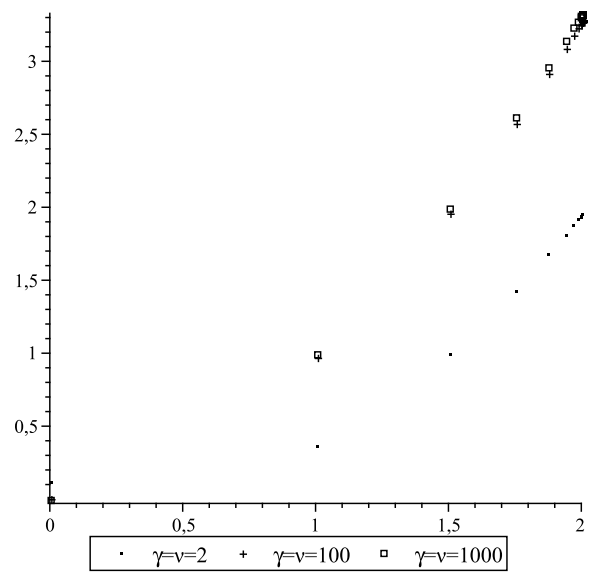}
\caption{The minimizer \eqref{sol} of Example~\ref{example2} for fixed $q=0.5$, $\omega=1$ and different values of $\gamma$ and $\nu$.}
\label{fig2}
\end{center}
\end{figure}
\end{example}

\begin{remark}
Let $$\mathcal{L}[y] =\int_{0}^{1}\left(y(qt+\omega) +\frac{1}{2}
(D_{q,\omega}[y](t))^2\right)d_{q,\omega} t$$
and
$$\widetilde{y}_1(t)=\frac{1}{q+1}t^2-\frac{\omega}{q+1}t
+\frac{q+\omega}{q+1} \quad \mbox{and} \quad \widetilde{y}_2(t)=\frac{1}{q+1}t^2+\frac{q}{q+1}t.$$
Comparing Example \ref{example1} and Example \ref{example2}, we can conclude that
$$\mathcal{L}[\widetilde{y}_1] < \mathcal{L}[\widetilde{y}_2].$$
\end{remark}

In the next example we analyze an adjustment model in economics. For
a deeper discussion of this model we refer the reader to \cite{Sen}.

\begin{example}\label{ex:ad}
Consider the dynamic model of adjustment
\begin{equation*}
\mathcal{J}[y] =\sum_{t=1}^Tr^t\left[\alpha (y(t)-\bar{y}(t))^2
+(y(t)-y(t-1))^2)\right] \longrightarrow \textrm{min},
\end{equation*}
where $y(t)$ is the output (state) variable, $r > 1$ is the
exogenous rate of discount and $\bar{y}(t)$ is the desired target
level, and $T$ is the horizon. The first component of the loss
function above is the disequilibrium cost due to deviations from
desired target and the second component characterizes the agent's
aversion to output fluctuations. In the continuous case the
objective function has the form
\begin{equation*}
\mathcal{J}[y] =\int_{1}^Te^{(r-1)t}\left[\alpha
(y(t)-\bar{y}(t))^2 +(y'(t))^2)\right] \longrightarrow \textrm{min}.
\end{equation*}
Let $q\in ]0,1[$ and  $\omega\geq 0$ be fixed real numbers, and $I$ be an
interval of $\mathbb{R}$ such that $\omega_0,0,T\in I$. The quantum
model in terms of the Hahn operators which we wish to minimize is
\begin{equation}
\label{model} \mathcal{J}[y] =\int_{0}^{T}E(1-r,t)\left[\alpha
(y(qt+\omega)-\bar{y}(qt+\omega))^2 +
(D_{q,\omega}[y](t))^2\right]d_{q,\omega} t \longrightarrow
\textrm{min},
\end{equation}
where $E\left(z,\cdot\right)$ is the $q,\omega$-exponential function
defined by
\begin{equation*}
E\left(z,t\right):=\prod _{k=0}^{\infty}(1+zq^k(t(1-q)-\omega))
\end{equation*}
for $z\in \mathbb{C}$. Several nice properties of the
$q,\omega$-exponential function can be found in
\cite{1,Aldwoah:IJMS}. By Theorem~\ref{Theorem natural boundary
conditions}, a solution to problem \eqref{model} should satisfy the
following conditions
\begin{equation}
\label{EL:ap} E(1-r,t)\left[\alpha
(y(qt+\omega)-\bar{y}(qt+\omega))\right]=D_{q,\omega}\left[E(1-r,\cdot)D_{q,\omega}[y]\right](t),
\end{equation}
for all $t\in\{\omega[n]_q:
n\in\mathbb{N}_{0}\}\cup\{Tq^n+\omega[n]_q: n\in\mathbb{N}_{0}\}
\cup \{\omega_0\}$; and
\begin{equation}
\label{a:ap} \left.E(1-r,t)D_{q,\omega}[y](t)\right|_{t=0}=0,\quad
\left.E(1-r,t)D_{q,\omega}[y](t)\right|_{t=T}=0.
\end{equation}
Taking the $q,\omega$-derivative of the right side of \eqref{EL:ap}
and applying properties of the $q,\omega$-exponential function, for
$t$ such that $|t-\omega_0|<\frac{1}{(r-1)(1-q)}$, we can rewrite
\eqref{EL:ap} and \eqref{a:ap} as
\begin{equation}
\label{El:ap:1} \left[1-(r-1)(t(1-q)-\omega)\right]\alpha
(y(qt+\omega)-\bar{y}(qt+\omega))=(r-1)D_{q,\omega}[y](t)+D_{q,\omega}[D_{q,\omega}[y]](t),
\end{equation}
\begin{equation}
\label{a:ap:1} \left.D_{q,\omega}[y](t)\right|_{t=0}=0,\quad
\left.D_{q,\omega}[y](t)\right|_{t=T}=0.
\end{equation}
Note that for $(q,\omega )\rightarrow (1,0)$ equations
\eqref{El:ap:1} and \eqref{a:ap:1} reduce to
\begin{equation*}
\alpha(y(t)-\bar{y}(t))=(r-1)y'(t)+y''(t),
\end{equation*}
\begin{equation*}
 \left.y'(t)\right|_{t=0}=0,\quad \left.y'(t)\right|_{t=T}=0,
\end{equation*}
which are necessary optimality conditions for the continuous model.

\end{example}


\section{Conclusions} \label{sec:con}

In this paper we prove optimality conditions for quantum variational problems with a
Lagrangian depending on the unspecified end-points $y(a)$, $y(b)$. Our approach uses
the quantum derivative in the forward sense:
\[
D_{q,\omega}\left[  f\right]  \left(  t\right)  :=\frac{f\left(  qt+\omega
\right)  -f\left(  t\right)  }{\left(  q-1\right)  t+\omega},  \quad t\neq\omega_0
\]
where $q\in ]0,1[$ and $\omega\geq0$, which corresponds to the delta approach in the time scale context.
However, sometimes with respect to applications (see \cite{A:U:08,Atici,M:T,MalinowskaTorres2})
the backward approach is preferable. In this sense the quantum operator
\[
D_{q,\omega}\left[  f\right]  \left(  t\right)  :=\frac{f\left(  qt+\omega
\right)  -f\left(  t\right)  }{\left(  q-1\right)  t+\omega},  \quad t\neq\omega_0
\]
where $q\in ]0,1[$ and $\omega<0$, could be considered.
Other interesting open question consists of finding a solution of equation \eqref{El:ap:1}.
As we have observed choosing particular values of $r$, $\alpha$, and a target function,
a numerical method should be used in order to solve the Euler-Lagrange equation for the problem in Example~\ref{ex:ad}.
Those issues need to be examined further and will be considered in the future.


\section*{Acknowledgments}

The authors are grateful to the support of the \emph{Portuguese
Foundation for Science and Technology} (FCT) through the
\emph{Center for Research and Development in Mathematics and
Applications} (CIDMA). Agnieszka B. Malinowska is also supported by BUT Grant S/WI/2/2011.
We would like to sincerely thank the reviewer for her/his constructive comments.



\end{document}